\definecolor{lblue}{rgb}{0.8,0.8,1}
\definecolor{lgray}{rgb}{0.85,0.85,0.85}
\newcommand{\bfc}{{\boldsymbol c}}
\newcommand{\bfb}{{\boldsymbol b}}
\newcommand{\bfA}{{\boldsymbol A}}
\newcommand{\bfB}{{\boldsymbol B}}
\newcommand{\bfx}{{\boldsymbol x}}
\newcommand{\bfy}{{\boldsymbol y}}
\newcommand{\bfz}{{\boldsymbol z}}
\newcommand{\bfu}{{\boldsymbol u}}
\newcommand{\bfv}{{\boldsymbol v}}
\newcommand{\bftheta}{{\boldsymbol\theta}}
\newcommand{\bftwei}{{\boldsymbol\vartheta}}
\newcommand{\bfI}{{\boldsymbol I}}
\newcommand{\bfP}{{\boldsymbol P}}
\newcommand{\bfp}{{\boldsymbol \lambda}}
\newcommand{\bfq}{{\boldsymbol \mu}}
\newcommand{\bff}{{\boldsymbol f}}
\newcommand{\bfn}{{\boldsymbol n}}
\newcommand{\bflam}{{\boldsymbol \lambda}}
\newcommand{\bfmu}{{\boldsymbol \mu}}
\newcommand{\bfS}{{\boldsymbol S}}
\newcommand{\bfK}{{\boldsymbol K}}
\newcommand{\bfsig}{{\boldsymbol\sigma}}
\newcommand{\bfeps}{{\boldsymbol\varepsilon}}
\newcommand{\IR}{\mathbb{R}}
\newcommand{\lom}[2]{(#1,#2)_\Omega}
\newcommand{\lga}[2]{\left<#1,#2\right>_{H^{1/2}(\Gamma),H^{-1/2}(\Gamma)}}
\newcommand{\sign}{-}
\newcommand{\mcCP}{\text{D}}
\newtheorem{proposition}{Proposition}
\newtheorem{remark}{Remark}
\newenvironment{proof}{\noindent \newline {\bf Proof.}}
{\hfill \mbox{\fbox{} } \newline}
\numberwithin{equation}{section}
\date{}             
\begin{document}

\title{The augmented Lagrangian method as a framework for stabilised methods in computational mechanics}
\author[$\dagger$]{Erik Burman}
\author[$\ddagger$]{Peter Hansbo}
\author[$\star$]{Mats G. Larson}
\affil[$\dagger$]{\small Department of Mathematics, University College London, London, UK--WC1E  6BT, United Kingdom}
\affil[$\ddagger$]{\small Department of Mechanical Engineering, J\"onk\"oping University, SE-55111 J\"onk\"oping, Sweden}
\affil[$\star$]{\small Department of Mathematics and Mathematical Statistics, Ume{\aa} University, SE-90187 Ume{\aa}, Sweden}
%
%
\maketitle

\abstract{In this paper we will review recent advances in the application of the  
augmented Lagrange multiplier method as a general approach for generating 
multiplier--free stabilised methods. We first show how the method generates 
Galerkin/Least Squares type schemes for equality constraints and then how it 
can be extended to develop new stabilised methods for inequality constraints. 
Application to several different problems in computational mechanics is given.}

\section{Introduction}
\label{sec:1}
The Augmented Lagrangian Method (ALM) has a long history in optimisation. In its standard form it can be seen as
augmenting standard Lagrange multiplier methods with a penalty term, penalising the constraint equations. It was introduced in order to combine the advantages of the penalty method and the multiplier method in the context on constrained optimisation independently by Hestenes and Powell in \cite{Hest69, Pow69}. It was then extended to the case of optimization with inequality constraints by Rockafellar in \cite{Ro73, Ro73b}.
Soon afterwards the potential of ALM for the numerical approximation of partial differential equations (pde) and computational mechanics was explored in Glowinski and Morocco \cite{GM75} and by Fortin in \cite{Fort77}. For overviews of the early results on augmented Lagrangian methods for approximation of pde we refer to the monographs by Glowinski and coworkers \cite{FG83,GlLeTa89}.

In computational mechanics,  Lagrangian methods have the drawback of having to fulfil an {\em inf--sup}\/ condition to ensure
stability of the discrete scheme such that the balance between the discretisation of the primal variable and the multiplier variable must be
chosen carefully. Adding a penalty term does not change this situation, and in computational mechanics ALM has therefore 
been used mostly in an iterative approach (improving the conditioning of the discrete system) \cite{FG83,GlLeTa89,SiLa92,WrZa93,LaOa94,ZaWrSc95,WhWiWo14}, or as a way of strengthening control of the constraints in cases where the 
discretisation is under-constrained. It was also shown to improve convergence in some cases by making the penalty parameter mesh dependent in \cite{BoLo97}. Recently similar ideas have been applied in the context of preconditioning solution methods for discretisations of incompressible flows \cite{FMW19, OZ22}. The ideas of extending the ALM to variational inequalities of \cite{Ro73, Ro73b} were introduced in the context of contact mechanics by Alart and Curnier in \cite{AC91}. 

An early approach to weak boundary conditions for finite element methods was introduced orignally by Nitsche in \cite{Nit70}, using a method that is related to ALM, but without any multiplier. Indeed here the multiplier has been replaced by its physical representation, the normal boundary flux. Only recently 
this possibility of substituting the multiplier by its physical interpretation in the discrete augmented Lagrangian formulation has been explored in its generality. This approach gives rise to
schemes that are formally equivalent to stabilised Lagrange multiplier methods, where the stabilisation is of Galerkin/Least Squares (GLS) type \cite{St95}.

There is, however a crucial difference between the ALM and GLS stabilisation method, and that is the treatment of variational inequalities.
The classical GLS formulation for variational inequalities of Barbosa and Hughes \cite{BH92} is very close to standard multiplier schemes, whereas 
the ALM supplies an alternative way to define the stabilisation mechanism which transforms the 
variational inequalities to nonlinear {\em equalities}\/ to which iterative schemes can be readily applied. 

There is a very large literature on on variational inequalities in pde and we can not survey the whole field herein. Below we will focus on works on finite element method formulation and error analysis. For theoretical background material relevant to the material herein we refer to \cite{DL76, KS80, Eck05} and for a review of computational aspects including design of special finite element spaces, adaptive method and solvers we refer to \cite{Wohl11} and references therein. 

The theoretical foundation for finite element approximation of variational inequalities was laid in the seminal works by Falk \cite{Falk74, Falk75}, by Brezzi et al. \cite{BFH77,BFH78} and Haslinger \cite{Has79}. For early overviews on computational aspects we refer to the monographs by Glowinski and co-workers \cite{GLT81, Glow84} and Kikuchi and Oden \cite{KO88}. More recent studies of the numerical analysis of finite element methods for variational inequalities include \cite{Hild00, BenBel00,Chen01,BB01,HL02,BB03,HR12}.
For further work on mixed finite element methods we refer to \cite{HJH82, Scholz87,CHL02,BR03hyb, SBL04,BRS05,Sch11}.
For stabilised finite element methods in the context of variational inequalities see \cite{BH92, HH06, HR10, HRS16, GSV17a, GSV17b, GSV19}.
More recently discontinuous Galerkin methods and other non-conforming methods allowing for polygonal elements have been developed for different types of contact problems \cite{WW10, WWC11,BS12, ZCW15,ZCW17,FHS18,WW20, CEG20}. Another recent development is the application of isogeometric analysis to contact problems \cite{TWH11, DEH15, HCHCB18, ABF19}.
Some results on fourth order problems have been reported in \cite{Has78, Scholz87, BSZ12a, BSZ12b, GP16, GSV19}.
Some early error analyses for augmented Lagrangian finite element methods applied to variational inequalities have been proposed in \cite{Chen01, KKT03}.

Optimal error estimates for the unilateral contact problem however remained elusive and typically required some additional assumptions on the interface between the zones of contact and no contact.
The 
Nitsche ALM, where the multiplier is replaced by its physical interpretation, was first introduced and analysed for variational inequalities by Chouly and Hild \cite{ChHi13} in the setting of friction free small deformation elastic contact (without
explicit reference to augmented Lagrangians). In this context they also showed optimal error estimates without additional a priori assumptions on the contact set. A similar result for the Signorini problem using a Lagrange multiplier approach (without ALM) was derived in \cite{DH15}. The idea of using ALM with eliminated multiplier for contact problems was then extended to various other models in \cite{Ch14,ChHiRe15,ChHiRe15a,ChHiRe15b,ChMlRe18,ChHiLlRe19}; for an overview, cf. \cite{CFHMPR17}.
Finite element methods using ALM in the form of a nonlinear equality without eliminating the multiplier was analysed in \cite{BuHaLa19}. In the context of non-conforming approximation the approach has been applied in \cite{CCE20} and using IGA in \cite{Fab18, HCHCB18}. It has been explored for CutFEM applications in \cite{FaPoRe16,BuHa17}, for obstacle problems in \cite{BuHaLaSt17,BuHaLa18b}, and for Signorini boundaries in the plate model in \cite{BuHaLa19b}. Typically the analysis of Nitsche's method requires some additional regularity assumptions in order to make sense of the non-conforming terms and we will consider this case below. An analysis for low regularity solutions for Nitsche type methods applied to contact problems was proposed in \cite{GSV19, GSV20}.  The reformulation of the variational inequality as a nonlinear equality with elimination of the multiplier is also advantageous in multi physics applications as illustrated in \cite{BFF20,BFFG22} and to impose positivity in flow problems \cite{BE17}.

Our main objective in this paper is to introduce the ALM in a model context, starting with the original formulation for optimization under constraints and then presenting the extension to pde approximation in an abstract framework. Particular focus will be given to variational inequalities that are rewritten as nonlinear equalities in the ALM framework. Here we prove existence and best approximation estimates
for the multiplier method under the assumption of sufficient smoothness of the multiplier. We discuss stabilised methods and sketch how these results generalize to the case where the multiplier is eliminated. The versatility of the approach is 
then shown by applying it in some different
settings. 

In section \ref{sec:finite} we start by recalling the
augmented Lagrangian method in the finite
dimensional setting both for equality and inequality constraints and
derive the augmented Lagrangian formulation for inequality constraints
using the equality constraint formulation and slack variables. In
section \ref{sec:AGLS} we then discuss the use of the augmented
Lagrangian in the context of partial differential equations and
present the properties of the formulation in an abstract framework. We
show how the necessary a priori bounds for existence of discrete
solutions are obtained and we derive best
approximation estimates for the augmented Lagrangian finite element
method.  In section \ref{sec:concrete} we proceed and give a number of
different applications drawing from fluid and solid mechanics. The
paper finishes with some numerical experiments in section
\ref{sec:numerics} showing the versatility of the proposed framework.
\section{The finite dimensional setting}\label{sec:finite}

We begin by recalling the ALM for finite dimensional optimisation problems and by giving an informal introduction to some key ideas to be used in the following.
Below we will frequently use the notation $a \lesssim b$ for $a \leq C b$.

\subsection{Optimisation with equality constraints}\label{sec:findim}
We consider the  quadratic optimisation problem:
\begin{align}\label{eq:opt-eq}
\min_{\bfx \in \IR^n} f(\bfx) \quad \text{subject to} \quad  g_i(\bfx) = 0, \quad i=1,\dots, m 
\end{align}
This problem can be solved by the Lagrange multiplier method, seeking stationary points to the function
\begin{equation}
\mathcal{L}(\bfx,\lambda_1,\ldots,\lambda_m) = f(\bfx) +\sum_{i}\lambda_i g_i(\bfx)\label{lagfunc}
\end{equation}
solving the system of equations
\begin{align}
\nabla f \sign \sum_i\lambda_i \nabla g_i = {}& {\bf 0}\label{lag1}\\
g_i={}& 0, \; i=1,\ldots, m\label{lag2}
\end{align}
It can also be solved approximately by the penalty method, seeking the minimum to the function
\begin{equation}\label{eq:fin_penalty}
\mathcal{L}_\gamma(\bfx) = f(\bfx) +\frac{\gamma}{2}\sum_{i}g_i(\bfx)^2
\end{equation}
where $\gamma\in \IR^+$ is a given (large) penalty parameter.
We note that the penalty method has a strong regularising effect on the problem in the sense that if
some of the side conditions are (close to being) linear combinations of each other, this does not matter; indeed even if $g_j(\bfx) = g_1(\bfx)$ for all $j$ we simply solve
\begin{equation}
\mathcal{L}_\gamma(\bfx) = f(\bfx) +m\frac{\gamma}{2} g_1(\bfx)^2
\end{equation}
which is a well posed problem. This is not the case in the multiplier method, where the system (\ref{lag1})--(\ref{lag2}) would then be ill posed.
The key point is that the side conditions do not come into play explicitly in the penalty method.
\textcolor{black}{On the other hand, in general the minimiser of \eqref{eq:fin_penalty} coincides with that of \eqref{eq:opt-eq} only in the limit as $\gamma \rightarrow \infty$.}
The ALM is a combination of the penalty method and the multiplier method: seek the stationary point to
\begin{align}\label{augfunc}
\mathcal{L}_{\gamma}(\bfx,\bflam) &= f(\bfx) \sign \sum_{i}\lambda_i g_i(\bfx) +\frac{\gamma}{2}\sum_{i}g_i(\bfx)^2
\end{align}
This problem has the same stationary point as (\ref{lagfunc}) and the same stability problem in case of linearly independent side conditions.
We note, however, that the multiplier can be eliminated by first solving (\ref{lag1}), which we symbolically denote by
\begin{equation}\label{multint}
\lambda_i = \frac{df}{dg_i}(\bfx)
\end{equation}
(the multipliers can be interpreted as the change in objective with respect to change in the corresponding side condition), and seek the minimum to the reduced Lagrangian
\begin{equation}
\mathcal{L}_{A}(\bfx) = f(\bfx) +\sum_{i}\frac{\gamma}{2} g^2_i(\bfx)-\frac{df}{dg_i}(\bfx) g_i(\bfx)
\end{equation}
Like in the penalty method, the side conditions are then no longer explicit; however, in case of linear dependence we still have an ill posed problem
in solving (\ref{lag1}) and we cannot obtain the representation (\ref{multint}). But say that we had an alternative way of computing the multiplier so that symbolically we had
\begin{equation}\label{eq:newmult}
\lambda_i^*(\bfx)  \approx \frac{df}{dg_i}(\bfx), \quad \lambda_i^*(\bfx)\; \text{computable}
\end{equation}
Then we could consider  the problem of minimising 
\begin{equation}\label{newapp}
\mathcal{L}_{A}^{*}(\bfx) = f(\bfx) +\sum_{i}\left(\frac{\gamma}{2} g_i(\bfx)^2-\lambda_i^*(\bfx) g_i(\bfx)\right) 
\end{equation}
The accuracy of this method would then depend on the accuracy of the approximation (\ref{eq:newmult}) and the stability of the formulation. A typical situation is that there 
is a constant such that 
\begin{equation}
\sum_i |\lambda_i^*(\bfx)|^2 \leq C f(\bfx)
\end{equation}
which gives 
\begin{align}
\mathcal{L}_{A}^{*}(\bfx) &= f(\bfx) +\sum_{i}\left(\frac{\gamma}{2} g_i(\bfx)^2-\lambda_i^*(\bfx) g_i(\bfx)\right)
\\
  &= f(\bfx) +\sum_{i}\left(\frac{\gamma}{2} g_i(\bfx)^2-
  \delta |\lambda_i^*(\bfx)|^2 - \frac{1}{4 \delta} g_i^2(\bfx)  \right)
 \\
  &\geq  f(\bfx) - 
  \delta \sum_{i}  |\lambda_i^*(\bfx)|^2  +\sum_{i} \left( \frac{\gamma}{2} - \frac{1}{ 4 \delta} \right) g_i^2(\bfx)
\\
  &\geq ( 1- \delta C ) f(\bfx) 
  +\sum_{i} \left( \frac{\gamma}{2} - \frac{1}{ 4 \delta} \right) g_i^2(\bfx)  
\\
&\gtrsim  f(\bfx)  + \sum_{i}  g_i^2(\bfx)  
\end{align}
where we obtained the last estimate by taking  $\delta$ sufficiently small and $\gamma$ sufficiently large. We conclude that the minimization problem for $\mathcal{L}_{A}^{*}(\bfx)$ is well posed if $\gamma > \gamma_C$. This is the basic idea 
that underlies the application of the ALM as a stabilisation method, in cases where the multiplier can be eliminated.

\subsection{Optimisation with inequality constraints}

We consider next a quadratic optimisation problems of the type: 
\begin{align}\label{eq:opt-ineq}
\min_{x \in \IR^n} f(\bfx) \quad \text{subject to} \quad  g_i(\bfx) \leq 0, \quad i=1,\dots, m 
\end{align}

The augmented Lagrangian for this problem proposed by Rockafellar \cite[Equation (7)]{Ro73} (here with $\gamma=2 r$, and with the multiplier chosen negative) takes the form for $\gamma \in \mathbb{R}^+$,
\begin{align} 
\mathcal{L}_{A}(\bfx,\bflam) 
= & f(\bfx) +\frac{1}{2\gamma} \sum_{i}  \left( [ \gamma g_i(\bfx) \sign \lambda_i]_+^2 -\lambda_i^2\right) \label{eq:rewrite}
\end{align}
where $[x]_+ = \max(x,0)$.

Observe that another equivalent reformulation is given by
\begin{align} \label{eq:rewrite2}
\mathcal{L}_{A}(\bfx,\bflam)  = & f(\bfx) \sign \sum_{i}\lambda_i g_i(\bfx) +\frac{\gamma}{2}\sum_{i}g_i(\bfx)^2 - \frac{1}{2\gamma} \sum_{i} [ \gamma g_i(\bfx) \sign \lambda_i]_-^2
\end{align}
where $[x]_- = \min(x,0)$.
This is easily seen by using that $x = [x]_+ + [x]_-$ and hence
\[
[ x]_+^2 = ([x]_+ +[x]_-)^2 - [x]_-^2 - \underbrace{2 [x]_+ [x]_-}_{=0} = x^2 - [x]_-^2
\]
Applying this in \eqref{eq:rewrite} with $x = \gamma g_i(\bfx) \sign \lambda_i$ leads to \eqref{eq:rewrite2}.
In \eqref{eq:rewrite2} we recognise the augmented Lagrangian for the equality constraint \eqref{augfunc} in the first three terms and the last term is the non-linear switch that introduces the inequality constraint.

To see that \eqref{eq:rewrite} is indeed the natural formulation we introduce slack variables $z_i \in \IR_+$ and rewrite (\ref{eq:opt-ineq})  
in the form 
\begin{align}
\min_{(\bfx,\bfz) \in \IR^n \times \IR^m_+} f(\bfx) \quad \text{subject to} \quad g_i(\bfx)+z_i =0, \quad i=1,\dots,m
\end{align}
with corresponding augmented Lagrangian  
\begin{align}
\mathcal{L}_{A}(\bfx,\bfz,\bflam) = f(\bfx) \sign \sum_i \left\{ (g_i(\bfx) + z_i) \lambda_i + \frac{\gamma}{2} (g_i(\bfx) + z_i)^2 \right\}
\end{align}
for which we seek stationary points, minimizing in $(\bfx, \bfz)$.
Here we may now perform the optimization over $\bfz \in \IR_+^m$ explicitly by noting that for each $\bfx$ and $\bflam$ we obtain a sum of quadratic polynomials in $z_i$ of 
the form 
\begin{align}\label{eq:ineq-a}
\sign (g_i(\bfx) + z_i) \lambda_i + \frac{\gamma}{2} (g_i(\bfx) + z_i)^2
=
\frac{1}{2\gamma} \Big( (\gamma (g_i(\bfx) + z_i ) \sign \lambda_i )^2 - \lambda_i^2 \Big)
\end{align}
and therefore the minimum is attained at $\gamma z_i = - (\gamma g_i \sign \lambda_i)$ and taking the constraint $z_i \in \IR_+$ into account we find that $\gamma z_i = [-(\gamma g_i(\bfx) \sign \lambda_i)]_+$. Inserting this expression for $\gamma z_i$ into (\ref{eq:ineq-a}) and using the identity $a+[-a]_+ = [a]_+$ we arrive at 
\begin{align}\label{eq:ineq-b}
\mathcal{L}_{A}(\bfx,\bfz,\bflam) = f(\bfx) + \frac{1}{2\gamma}\sum_i ([\gamma g_i(\bfx) \sign \lambda_i]_+^2 - \lambda_i^2)
\end{align}

Alternatively we may seek stationary points to the standard Lagrangian
\begin{equation}\label{lagfunc2}
\mathcal{L}(\bfx,\lambda) = f(\bfx) \sign \sum_{i}\lambda_i g_i(\bfx)
\end{equation}
under the Karush--Kuhn--Tucker (KKT) conditions 
\begin{align}
g_i\leq {}& 0, \; i=1,\ldots, m\label{KT1}\\
\lambda_i\leq {}& 0, \; i=1,\ldots, m\label{KT2}\\
\lambda_ig_i= {}& 0, \; i=1,\ldots, m\label{KT3}
\end{align}
Noting that the  KKT conditions  (\ref{KT1})--(\ref{KT3}) are equivalent to 
the single statement
\begin{equation}\label{eq:lagsubst}
\lambda_i = \sign [\gamma g_i \sign \lambda_i]_+
\end{equation}
where $\gamma\in\IR^+$ is an arbitrary positive number.
We may then rewrite the Lagrangian in the form
\begin{align}
 f(\bfx) \sign \sum_{i}\lambda_i g_i(\bfx) 
 &=  f(\bfx) \sign \sum_{i}\lambda_i \Big( g_i(\bfx) \sign \frac{1}{\gamma}\lambda_i \Big)
 - \frac{1}{\gamma} \lambda_i^2
 \\
  &=  f(\bfx) + \frac{1}{\gamma} \sum_i   \underbrace{[\gamma g_i(\bfx) \sign \lambda_i]_+ ( \gamma g_i(\bfx) + \lambda_i )}_{[\gamma g_i(\bfx) \sign \lambda_i]_+^2}
 - \frac{1}{\gamma} \lambda_i^2
\end{align}
where we used (\ref{eq:lagsubst}) and the fact that $[a]_+a = [a]_+^2$. The substitutions $\lambda_i \mapsto \lambda_i/2$ and $\gamma \mapsto 2 \gamma$ manufactures the Lagrangian (\ref{eq:ineq-b}).

Writing the optimality system of (\ref{eq:ineq-b}) results in the system of equations
\begin{align}
\nabla f +\sum_i[\gamma g_i-\lambda_i]_+ \nabla g_i = {}& {\bf 0}\label{lagineq1}\\
[\gamma g_i-\lambda_i]_+ ={}& -\lambda_i, \; i=1,\ldots, m\label{lagineq2}
\end{align}
which is a nonlinear equality problem which explicitly includes the KKT conditions. 

Again, if we can use (\ref{eq:newmult}) we may instead seek the minima to 
\begin{equation}\label{newapp2}
\mathcal{L}_{A}^{*}(\bfx) := f(\bfx) +\frac{1}{2\gamma} \sum_{i} [\gamma g_i(\bfx)-\lambda_i^*(\bfx)]_+^2 -(\lambda_i^*(\bfx))^2 
\end{equation}

\section{Iterative solution using the augmented Lagrangian}
The augmented Lagrangian is possibly most well known as the basis for an iterative algorithm for constrained optimization problems. The stationary points of the functional \eqref{augfunc} can be approximated using the following classical algorithm attributed to Usawa, with the application to augmented Lagrangian methods developed in the works of Glowinski and co-workers \cite{AHU58, GLT81, FG83, GlLeTa89}. Following \cite{FG83} we consider the situation where the model problem is to minimize
\[
J(\bfx) := \frac12 \bfx^T \bfA \bfx - \bfb^T \bfx 
\]
over $\bfx \in \IR^n$ under the constraint $\bfB \bfx = \bfc \in \IR^m$. Here $\bfA \in \IR^{n\times n}$ is symmetric positive definite, $\bfb \in \IR^n$ and $\bfB \in \IR^{m \times n}$. The augmented Lagrangian \eqref{augfunc} then takes the form,
\begin{equation}\label{eq:LAforiter}
\mathcal{L}_{A}(\bfx,\bflam) := \frac12 \bfx^T \bfA \bfx - \bfb^T \bfx + \bflam^T(\bfB \bfx -\bfc) + \frac{\gamma}{2} |\bfB \bf x- \bfc|^2
\end{equation}
\begin{algorithm}(Uzawa's algorithm)
\begin{enumerate}
\item Let $\bflam^0 \in \IR^m$
\item Find $\bfx^n \in \IR^n$ such that $\mathcal{L}_{A}(\bfx^n,\bflam^n) \leq \mathcal{L}_{A}(\bfy^n,\bflam^n)$ for all $\bfy^n \in \IR^n$ 
\item Update the multiplier: $\bflam^{n+1} = \bflam^{n} + \rho_n (\bfB \bfx^n- \bfc), \quad \rho_n > 0$.
\end{enumerate}
\end{algorithm}

We note that step 2 is equivalent to solving the linear system, find $\bfx^n \in \IR^n$ such that
$$
(\bfA + \gamma \bfB^T \bfB) \bfx^n = - \bfB^T \bflam^n + \bfb + \gamma \bfB^T \bfc
$$
The iterates $\bfx^n, \bflam^n$ of the iterative method converges to the saddle point of \eqref{eq:LAforiter} provided the steplength  $\rho_n$ satisfies
\[
0 < \alpha_0 \leq \rho_n \leq \alpha_1 < 2 \left(\gamma + \frac{1}{\beta^2} \right)
\]
where $\beta^2$ is the largest eigenvalue of the matrix $\bfA^{-1} \bfB^T \bfB $ defined by
\[
\beta^2 = \max_{\bfv \ne 0} \frac{|\bfB \bfv|^2}{\bfv^T \bfA \bfv}
\]
For a proof of the convergence result we refer to \cite[Chapter 2, Section 4]{GLT81} or \cite[Chapter 1, Section 2]{FG83}.
%

\section{Augmented Lagrangian methods and Galerkin/Least squa\-res}\label{sec:AGLS}

We now turn to the case where the Lagrangian is a functional taking values in some Sobolev space and the numerical method is obtained by finding the stationary points in a finite dimensional approximation space. Typically we are interested in the discretisation of a problem where some energy is minimised under a constraint. To illustrate this we consider the case with equality constraints. Let $V$ and $H$ denote two Hilbert spaces, with dual spaces $V'$ and $H'$, respectively. Let $F:V \rightarrow
\mathbb{R}$ denote a strictly convex $C^2$-functional and $B:V \rightarrow H$ a
linear operator. We are interested in minimising $F$ under a
constraint defined by $B$.  Given the data $f\in V'$ and $g\in H$ We consider the optimization problem
\begin{equation}\label{eq:contopt}
u = \mbox{arginf}_{v \in V} F(v)  - \left<f,v\right>_{V',V} \mbox{ such that } B u = g.
\end{equation}
The Lagrangian takes the form
\begin{equation}\label{eq:Lag_form_cont}
\mathcal{L}(v,\mu) := F(v)  - \left<f,v\right>_{V',V}-
\left<\mu,Bv-g \right>_{H',H}.
\end{equation}
This problem can be shown to have unique solution under suitable hypothesis on the spaces $V$ and $H$ and the operators $F$, $B$, $f$ and $g$ (see for instance \cite[Chapter 1, Section 2.1, Theorems 2.1 and 2.2]{Lions69}). Augmenting the Lagrangian has no effect on the continuous level, but formally an augmented version of \eqref{eq:Lag_form_cont}, in the spirit of \eqref{augfunc} can be written
\begin{equation}\label{eq:ALabst_cont}
\mathcal{L}_\text{A}(v,\mu) := F(v)  - \left<f,v\right>_{V',V}-
\left<\mu,Bv-g \right>_{H',H} + \frac{\gamma}{2} \|Bv-g\|_H^2.
\end{equation}
The discrete version of the ALM based on \eqref{eq:ALabst_cont}, would then be obtained by restricting $\mathcal{L}_\text{A}$ to finite dimensional spaces. As we saw in the previous section the ALM on the discrete level combines the control of the constraint given by the Lagrange multiplier and of the penalty. It also gives us an iterative procedure to find the minimiser. When using the ALM in the context of pde problems the ALM also gives enhanced control of the side condition in the sense of a GaLS method, or a variational multiscale method. To see this we assume that $H=H'=L^2$ and that $H'_h \subset H'$, $V_h \subset V$ are some finite dimensional approximation spaces. Here $h$ denotes the characteristic lengthscale (or mesh parameter) of the discrete space. We let $\pi_H:H \mapsto H'_h$ denote the $L^2$-orthogonal projection onto $H'_h$. Since 
\[
\left<\mu_h,Bv-g \right>_{L^2} = \left<\mu_h,\pi_H (Bv-g) \right>_{L^2}
\]
we see that the Lagrange multiplier only gives control of the projection of $Bv-g$ on the finite dimensional subspace $H_h'$. This may be insufficient for the stability of the method, in particular since $H_h'$ may need to be chosen small compared to $V_h$ for stability reasons, i.e. to satisfy the inf-sup stability condition that we will discuss below. A classical example is the stability of the incompressibility constraint (in which case $B$ is the divergence operator) of the Brinkman problem when the viscosity becomes negligible. Adding the term $\|Bv-g\|_H^2$ enhances the stability,
by adding control of $(I - \pi_H)(Bv-g)$ compared to the pure Lagrange multiplier method. This also shows that a sufficient stabilization can be achieved by augmenting with $\|(I - \pi_H)(Bv-g)\|_H^2$. This we recognise as a stabilization of the orthogonal subscales, which is a member of the family of variational multiscale methods. Of course in the associated Euler-Lagrange equations these terms take the form of GLS stabilizations of some residual quantities. Indeed a number of ideas from the field of stabilized methods can be made to bear to the ALM, but we will not explore this further herein. Instead we will show in the examples below how the design of finite element methods using the ALM allows us to recover some well known GLS methods from computational mechanics.

We can discern two different situations for the continuous problem \eqref{eq:ALabst_cont}:
\begin{enumerate}
\item[A.] The multiplier has enough regularity to define a scalar product with the side condition.
\item[B.] The multiplier has only regularity enough to support a duality pairing with the side condition.
\end{enumerate}
In the first case we can use an analogue to the reformulation  (\ref{eq:rewrite}) which is convenient for the treatment of inequality conditions, and formulate the problem on the continuous level; in the second case this is not formally correct. Indeed if the multiplier does not have sufficient regularity the augmented continuous formulation does not lead to a well-defined problem, unless the augmentation is taken in the continuous $H$-norm, which may be inconvenient from computational standpoint. In this case the reformulation (\ref{eq:rewrite}) is not available. We emphasize that this is  not a problem
in the discrete setting since we can use norm equivalence of discrete spaces to obtain an ALM  that has the right asymptotic scaling. However in order to carry out a rigorous numerical analysis of the resulting finite element method the assumption of additional regularity of the exact solution must be justifiable. This is often, but not always the case. In that sense ALM methods in the situation B can be seen as a non-conforming method.

For the discrete as well as the continuous problem we have two further cases:
\begin{enumerate}
\item[C.] The multiplier has a physical interpretation in terms of the primal variable.
\item[D.] The multiplier cannot be interpreted (or be easily interpreted) in terms of the primal variable.
\end{enumerate}
For the discrete case, we also have the problem of finding suitable approximations to fulfil a discrete {\em inf--sup}\/ condition.
In case C we can use a trick analogous to that of (\ref{eq:newmult}), which gives a class of problems where the
multiplier has been eliminated beforehand; alternatively, the multiplier can be retained and stabilised by the addition of a GLS term, in the spirit of \cite{BaHu91, BH92}. These approaches
give stability without balancing the discretisation of the multiplier space and the space for the primal variable. In case D the multiplier has to be retained, but the inequality case can still be handled in the same way as above and stabilisation is still possible, for instance using interior penalty stabilization where the stabilization acts on the multiplier alone \cite{BuHa10, BuHaLa19}.

\subsection{Abstract framework}

Since the 
rationale of the method is from numerical approximation we will only
consider formulations that work in the finite dimensional setting,
then A and B above are treated similarly. However it is only in case A
that the discussion holds also for the continuous case.
The resulting numerical methods can be shown to be optimally converging for sufficiently smooth exact solutions, but the problem of convergence is not established for exact solutions that has no additional regularity. 
The question of how to design methods that are valid formulations also for the original pde problem is subtle and requires the design of sophisticated stabilization operators, for an interesting work in this direction we refer to \cite{Bert00}. Below we let $\left<\cdot,\cdot \right>$ denote the $L^2$ scalar product over the the domain of definition of functions in $H$ and we denote the associated norm $\|v\| := \left<v,v\right>^{\frac12}$.

We are interested in minimising $F$ under a
constraint defined by $B$, either as an equality or an inequality
constraint. We will now introduce some sufficient conditions for the abstract analysis below to hold. We will then in the examples show that the assumptions are verified.
\begin{enumerate}
\item We  assume that the operator $B$ is bounded and surjective from 
$V$ to $H$, so that for every $\zeta\in H$ there exists $\xi \in V$
such that $ B \xi = \zeta$ and  $\|\xi\|_V \leq C \|\zeta\|_{H}$. It follows that there exists $\alpha>0$ such that for every
$\mu \in H'$ there holds
\begin{equation}\label{eq:cont_infsup}
\alpha \|\mu\|_{H'} \leq \sup_{v \in V} \frac{\left<B v, \mu \right>_{H,H'}}{\|v\|_{V}}
\end{equation}
\item We also assume that $V_h$ and $H_h'$ are chosen in such a way that this property carries over to the finite dimensional setting, in the sense that a so called Fortin interpolant exists, for all $v \in V$ such that $B v \in H$, there exists $\exists i_F v \in V_h$ such that for all $q_h \in H_h'$,
\begin{equation}\label{eq:Fortin_interp}
\left<B (v - i_F v), q_h\right> = 0, \quad \|i_F v\|_V+ \|B i_F v\|_{H_h} \lesssim \|v\|_V + \|B v\|_{H_h}
\end{equation}
Note that for $v \in V_h$ there holds $i_F v = v$. 
\item  We assume that the surjectivity also holds for the discrete spaces on the following form: for all $\mu_h \in H_h'$ there exists $v_h \in V_h$ such that for all $q_h \in H_h'$,
\begin{equation}\label{eq:disc_infsup}
\left<\mu_h -B v_h, q_h\right>= 0, \quad \|v_h\|_V + \|B v_h\|_{H_h} \lesssim  \|\mu_h\|_{H_h}
\end{equation}
\end{enumerate}
Discrete surjectivity is a consequence of the discrete inf-sup condition which typically is equivalent with the existence of the Fortin interpolant \cite[Lemma 26.9]{EG22b}. We state both \eqref{eq:Fortin_interp} and \eqref{eq:disc_infsup} separately here for future reference and to highlight the difference of the norms required in the right hand side. If we are in a non-conforming situation it is not immediately clear that equivalence holds. Note however that if the spaces are such that $\|B v - \pi_{H_h} B v\|_{H_h} \leq \|v\|_V$ then \eqref{eq:disc_infsup} implies \eqref{eq:Fortin_interp}. 

The form of the stabilities in \eqref{eq:Fortin_interp} and \eqref{eq:disc_infsup} appear a bit ad hoc here, but as we shall see below this is the natural stability to require for the analysis. Here the
norm  $\|\cdot\|_{H_h}$ is an $h$-weighted $L^2$-norm and will be discussed below. 
\subsection{Equality constraints}
We wish to solve the optimization problem \eqref{eq:contopt} and recall the formal 
augmented Lagrangian similar to \eqref{augfunc} given by 
\begin{equation}\label{eq:ALabst}
\mathcal{L}_\text{A}(v,\mu) := F(v)  - \left<f,v\right>_{V',V}-
\left<\mu,Bv-g \right>_{H',H} + \frac{ \gamma}{2} \|Bv-g\|_H^2
\end{equation}
For later use with inequality constraints, we would now like to use the analogy to (\ref{eq:rewrite}).
However, this is not possible unless $H' = H := L_2$,  where $L_2$ denotes the space of square integrable functions over the pertinent domain, which is case A above. In this particular case,  
completing the square, $-2 a b + b^2 = (a-b)^2 - a^2$, results in the following equivalent formulation
\begin{equation}\label{eq:absequ-a}
\mathcal{L}_\text{A}(v,\mu) := F(v)  + \left<f,v\right>_{V',V}+
\frac{\gamma}{2} \|Bv-g -\frac{1}{2 \gamma} \mu \|^2 - \frac{1}{2 \gamma} \|\mu\|^2
\end{equation}
analogous to (\ref{eq:rewrite}). We let the semi-linear form $a:V
\times V \rightarrow \mathbb{R}$
be defined by the Gateaux derivative of $F(v)$,
\begin{equation}
a(u;v) := \left<\frac{\partial F}{\partial u}(u), v \right>_{V',V}
\end{equation}
and we assume that the form $a$ satisfies the 
positivity, monotonicity and continuity conditions 
\begin{gather}\label{eq:aprop1}
a(v;v) \geq \alpha \|v\|_V^2, \qquad \alpha>0
\\
a(w_1;w_1-w_2) - a(w_2;w_1 - w_2 ) \ge \alpha \|w_1 - w_2\|_V^2 \label{eq:aprop2}\\
|a(w_1;v) - a(w_2;v)|\leq C \|w_1-w_2\|_V \|v\|_V \label{eq:aprop3}
\end{gather}
The optimality system obtained by differentiating (\ref{eq:absequ-a}) then reads: 
find $(u,\lambda) \in V \times H'$such that
\begin{eqnarray}
a(u;v) - \left<\lambda, Bv\right>_{H',H} - \left<\mu, Bu\right>_{H',H} +
  {\gamma} \left<Bu, Bv\right>_H = \left<f,v_h\right>_{V',V} +
  \left<g, \mu + {\gamma} Bv \right>_H
\end{eqnarray}
for all $(v,\mu)\in V\times H'$. Here we simply replace $V$ and $H'$ by $V_h$ and $H_h'$ to obtain the discrete method.

We also want to handle case B. Then typically $Bv\in H := H^r$ where $H^r$ denotes a (potentially fractional) Hilbert space with $r > 0$, 
and consequently $\mu\in H' := H^{-r}$, the dual to $H^r$. Since $H^{-r} \not \subset H^r$ the formulation \eqref{eq:absequ-a} no longer makes sense. Instead in the spirit of discretize first then optimize we move to the discrete counterpart of (\ref{eq:contopt}) and introduce
discrete spaces $V_h\subset V$ and $H'_h \subset H'$. The finite element method then amounts to seek stationary points in $V_h$ and $H'_h$ to the augmented Lagrangian (\ref{eq:ALabst}).
On the finite dimensional finite element spaces we can approximate the continuous norms $\|\cdot\|_H$ and $\|\cdot\|_{H'}$ by discrete counterparts
\begin{equation}\label{eq:discrete1}
\| Bv \|_H^2 \approx  \| Bv \|_{H_h}^2 := \| h^{-r} Bv \|_{L_2}^2
\end{equation}
and
\begin{equation}\label{eq:discrete2}
\| \mu \|_{H'}^2 \approx  \| \mu \|_{H'_h}^2 := \| h^{r} \mu \|_{L_2}^2
\end{equation}
where $h$ is the local meshsize (assumed constant in the following for simplicity) and $r\geq 0$ depends 
on the space $H$; loosely speaking $r$ corresponds to the number of derivatives present in the norm 
$\Vert\cdot\Vert_H$. It is also immediate by the Cauchy-Schwarz inequality that the following discrete duality property holds
\[
\left<v,\mu\right>_{H_h,H_h'}  := \left<v,\mu\right> \leq \| v \|_{H_h} \| \mu \|_{H'_h}.
\]
This is done for two reasons
\begin{enumerate}
\item To obtain a well conditioned method, we wish to have the same condition number emanating from the penalty term as from the form $a(\cdot,\cdot)$.
\item The analysis of the resulting methods requires that the discrete norms can be bounded in terms of the form $a(\cdot,\cdot)$ which is only possible if they scale the same way. 
\end{enumerate} 

Now we can use the arbitrariness of $\gamma$ to set
\begin{equation}\label{eq:gamma_scale}
\gamma = \gamma_0/h^{2r}
\end{equation}
where $\gamma_0$ is a problem-- and discretization--dependent constant. Proceeding as above we find that on discrete spaces
\[
\mathcal{L}^h_\text{A}(v,\mu) := F(v)  + \left<f,v\right>_{V',V}+
\frac{\gamma_0}{2h^{2r}} \|Bv-g -\frac{h^{2r}}{2 \gamma_0} \mu \|^2 - \frac{h^{2r}}{2 \gamma_0} \|\mu\|^2
\]
and the discrete optimality system reads: find $(u_h,\lambda_h) \in V_h \times H'_h$
such that
\begin{equation}\label{eq:equconst_FEM}
a(u_h;v) - \left<\lambda_h, Bv\right>- \left<\mu, Bu_h\right> +
  \frac{\gamma_0}{h^{2r}}  \left<Bu_h, Bv\right> = \left<f,v_h\right>_{V',V} +
  \left<g, \mu + \frac{\gamma_0}{h^{2r}} Bv \right>
\end{equation}
for all $(v,\mu)\in V_h\times H'_h$, where $\langle\cdot,\cdot\rangle$ denotes the standard $L_2$ scalar product.
Introducing the global form 
\[
A[(w,\eta);(v,\mu)] := a(w; v) - \left<\eta, Bv\right>- \left<\mu, Bw\right> +
  \frac{\gamma_0}{h^{2r}}  \left<Bw, Bv\right>,
\]
we can cast the optimality system on the compact form: find $(u_h,\lambda_h) \in V_h \times H'_h$
such that
\begin{equation}\label{eq:EulerLag_comp}
A[(u_h,\lambda_h);(v,\mu)] = \left<f,v\right>_{V',V}+
  \left<g, \mu + \frac{\gamma_0}{h^{2r}} Bv \right>.
\end{equation}
for all $(v,\mu) \in V_h \times H'_h$. 

It follows by inspection that any solution to \eqref{eq:contopt} that is sufficiently smooth, i.e. $(u,\lambda) \in V \times H' \cap L^2$ is a solution to \eqref{eq:equconst_FEM} and hence the formulation is consistent. Indeed the stationary point of \eqref{eq:Lag_form_cont} is given by the solution to
\[
a(u;v) - \left<\lambda, B v\right>_{H',H} = \left<f, v\right>_{V',V},\quad \forall v \in V
\]
and
\[
\left< B u , \mu\right>_{H,H'} = \left< g , \mu\right>_{H,H'}.
\]
If the solution is sufficiently regular these equalities hold with $H$ and $H'$ replaced by the $L^2$ norm and we see that in that case the exact solution satisfies the finite element formulation,
\[
\underbrace{a(u;v) - \left<\lambda, Bv\right>}_{=\left<f, v\right>_{V',V}} - \underbrace{\left<\mu, Bu\right> +
  \frac{\gamma_0}{h^{2r}}  \left<Bu, Bv\right>}_{=\left<g, \mu + \frac{\gamma_0}{h^{2r}} Bv \right>} = \left<f,v_h\right>_{V',V} +
  \left<g, \mu + \frac{\gamma_0}{h^{2r}} Bv \right>.
\]

We do not give a full analysis of the linear problem herein, but focus on the nonlinear case in the next section. The analysis immediately also applies to the linear case.

\subsection{Inequality constraints}

For the subsequent analysis, we will consider the discrete case and
hence we use the space $V_h$ for the primal variable and $H_h'$ for
the dual variable. For simplicity we do not use the subscript $h$ on
all variables below. We wish to solve the continuous optimization problem
\begin{equation}\label{eq:cont_min_ineq}
u = \mbox{arginf}_{v \in V} F(v)  - \left<f,v\right>_{V',V} \mbox{ such that } B u \leq 0
\end{equation}
Where the inequality constraint must be interpreted in the sense of
distributions on $H$ and we will denote the continuous multiplier
appearing in the constrained optimization $\lambda \in H'$.
The weak formulation characterizing the solution to the continuous problem is as follows. Find  $(u,\lambda) \in V \times K$ (where $K := \{ \mu \in H': \mu \leq 0\}$) such that
\begin{equation}\label{eq:stat_ineq}
a(u;v) - \left<\lambda, B v\right>_{H',H} = \left<f, v\right>_{V',V},\quad \forall v \in V
\end{equation}
\begin{equation}\label{eq:stat_ineq2}
\left< B u,  \lambda - \mu  \right>_{H,H'} \leq 0,\quad \forall \mu \in K
\end{equation}

It follows by choosing $\lambda - \mu>0$ in \eqref{eq:stat_ineq2} that $Bu \leq 0$. By taking $\mu=0$ it follows that $\left<B u , \lambda\right>_{H,H'} \leq 0$ and since both $Bu$ and $\lambda$ are negative it follows that $\lambda B u = 0$.

We have arrived at the following Kuhn--Tucker conditions on the multiplier and side condition:
\begin{equation}\label{eq:Kuhn0}
B u \leq 0,\quad \lambda \leq 0, \quad \lambda B u=0.
\end{equation}
We now use the analogue to (\ref{eq:lagsubst}), to show that
(\ref{eq:Kuhn0}) formally is equivalent to 
\begin{equation}\label{eq:lambda0}
\lambda = -{\gamma}\,[Bu-\gamma^{-1} \, \lambda]_+ 
\end{equation}
To derive the finite element formulation we also proceed
formally following the discussion of section \ref{sec:findim} applied to the problem \eqref{eq:cont_min_ineq} with the min taken over the finite dimensional space $V_h$ and write the
augmented Lagrangian, for $\gamma \in \mathbb{R}^+$, $(v,\mu) \in V_h \times H_h'$,

\begin{equation}\label{eq:compact_aug_final}
\mathcal{L}_\text{A}(v,\mu) := F(v)  - \left<f,v\right>_{V',V}
+ \frac{\gamma}2 \|[Bv \sign \mu/\gamma]_+\|^2 -
\frac{1}{ 2\gamma}\|\mu\|^2
\end{equation}
we note that if $\gamma$ is chosen as in \eqref{eq:gamma_scale} we may use \eqref{eq:discrete1} and \eqref{eq:discrete2} to write
\begin{equation}\label{eq:compact_HHprime}
\mathcal{L}_\text{A}(v,\mu) := F(v)  - \left<f,v\right>_{V',V}
+ \frac{\gamma_0}2 \|[Bv \sign \mu/\gamma]_+\|_{H_h}^2 -
\frac{1}{ 2\gamma_0}\|\mu\|_{H_h'}^2
\end{equation}

The finite element optimality system reads: find $(u_h,\lambda_h) \in V_h \times H_h'$
such that
\begin{equation}\label{eq:EulerLag_ineq}
A[(u_h,\lambda_h);(v,\mu)] = \left<f,v\right>_{V',V}
\end{equation}
for all $(v,\mu) \in V_h \times H_h'$,
where
\begin{equation}
A[(w,\eta);(v,\mu)] := a(w; v) + \left<\gamma [Bw \sign \zeta/\gamma]_+ , Bv \sign \mu/
  \gamma\right>- \left<\gamma^{-1} \zeta, \mu \right>.
\end{equation}
Note that in general ($H \ne H'$) and it is not possible to prove well-posedness of \eqref{eq:EulerLag_ineq} in continuous spaces. Nevertheless also in this case a sufficiently smooth solution of the original continuous problem will also be solution to the 
formulation \eqref{eq:EulerLag_ineq}, showing that the formulation remains
consistent.

First we note that for smooth solutions $\lambda \in K$ and \eqref{eq:stat_ineq2} are equivalent to \eqref{eq:lambda0}. Then evaluating \eqref{eq:EulerLag_ineq} at a sufficiently smooth exact solution $(u,\lambda)$ we see that for all $(v,\mu) \in V_h \times H_h'$
\begin{equation}\label{eq:AFEM_form}
A[(u,\lambda);(v,\mu)] := a(u; v) + \underbrace{\left<\gamma [Bu \sign \lambda/\gamma]_+, Bv \sign \mu/
  \gamma\right>}_{= -\left<\lambda, Bv \sign \mu/
  \gamma\right> \,\mbox{by \eqref{eq:lambda0}}} - \left<\gamma^{-1} \lambda, \mu \right> = a(u; v)- \left<\lambda, B v\right>_{H',H}
\end{equation}
and hence by \eqref{eq:stat_ineq} the formulation \eqref{eq:EulerLag_ineq} is consistent for exact solutions $(u,\lambda) \in V \times H'\cap L^2$.

To see the effect of the nonlinear formulation for active and non-active constraints, first assume $[Bw\sign \zeta/\gamma]_+>0$ in \eqref{eq:EulerLag_ineq}. The constraint is active and we see that the equation becomes
\[
a(w; v) - \left<\mu, Bw \right>-  \left< \eta, Bv \right>+ \left<\gamma Bw, Bv \right> = 0
\]
which we recognise as the augmented Lagrangian form from \eqref{eq:equconst_FEM} imposing the 
equality constraint $Bw = 0$. If on the other hand 
$[Bw\sign \eta/\gamma]_+=0$ then the constraint is not
active and the equation \eqref{eq:EulerLag_ineq} takes the form
\[
a(w; v) - \left<\gamma^{-1} \eta, \mu \right> = 0
\]
and we see that $B w$ is free and $\eta = 0$ is imposed. As expected the formulation expresses the conditions of \eqref{eq:Kuhn0} and acts as a nonlinear switch between imposing either $B u = 0$ and $\lambda = 0$.

 Using the parameter $\gamma$ introduced in
\eqref{eq:gamma_scale} and the h-weighted norms introduced in
\eqref{eq:discrete1} and \eqref{eq:discrete2} together with the
inequality $|[a]_+ - [b]_+| \leq |a - b|$ \cite{ChHi13} we see that
the following continuity holds
\begin{multline}\label{eq:Bcontinuity}
\left<\gamma ([Bw_1\sign\eta_1/\gamma]_+ - [Bw_2\sign\eta_2/\gamma]_+ , Bv + \mu/
  \gamma\right> \\
\lesssim (\|B (w_1 - w_2)\|_{H_h} + \|\eta_1 -
\eta_2\|_{H_h'})(\|Bv \|_{H_h} + \|\mu\|_{H_h'})
\end{multline}
Together with \eqref{eq:aprop3} this shows that the form $A$ is continuous.
If $H\equiv
L^2$, the formulation \eqref{eq:EulerLag_ineq} and \eqref{eq:Bcontinuity} makes sense on the
continuous level. Observe that unless $r=0$ the norms are $h$ dependent and hence the bound degenerates for decreasing $h$.

\subsubsection{Stability, existence and uniqueness of solutions}
We will now show that thanks to the properties \eqref{eq:aprop1} -
\eqref{eq:aprop3} we can derive a priori
bounds on $(w,\eta)$ that allows us to prove existence of a solution in the spaces $V_h \times H_h'$, using fixed point arguments. 
\begin{proposition}
Assume that \eqref{eq:cont_infsup}-\eqref{eq:disc_infsup} and \eqref{eq:aprop1}-\eqref{eq:aprop3} hold. Then for every fixed $h$ the formulation \eqref{eq:EulerLag_ineq} admits a unique solution $(u_h,\lambda_h) \in V_h\times H_h'$. The solution satisfies the a priori bound
\begin{equation}\label{eq:apriori_discrete}
\boxed{\|u_h\|_V +
\gamma_0^{\frac12} \|[Bu_h- \gamma^{-1} \lambda_h
 ]_+ + \gamma^{-1}\lambda_h \|_{H_h} + \gamma_0^{-\frac12} \|\lambda_h\|_{H_h'} \lesssim \|f\|_{V'}}
\end{equation}
\end{proposition}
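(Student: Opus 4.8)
The plan is to regard \eqref{eq:EulerLag_ineq} as the first--order optimality system of the finite dimensional saddle--point problem for the functional $\mathcal{L}_{\text{A}}$ in \eqref{eq:compact_HHprime}, and to organise the proof as: (i) an a priori bound valid for every solution; (ii) existence; (iii) uniqueness. By far the substantive step is (i). For (ii) I would note that on $V_h\times H_h'$ the map $\Phi$ obtained by Riesz representation of the residual of \eqref{eq:EulerLag_ineq} is continuous, and is of monotone type up to homotopy (replace $a(\cdot;\cdot)$ by an inner product using \eqref{eq:aprop2}--\eqref{eq:aprop3}, and scale $f$ to $0$); the a priori bound of step (i), applied also with $f$ replaced by $tf$, $t\in[0,1]$, keeps all solutions of the homotopy family in a fixed ball, and the $t=0$ problem has only the trivial solution (again by step (i)), so invariance of the Brouwer degree yields a solution for $t=1$. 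Equivalently one may invoke the finite dimensional convex--concave minimax theorem after checking that $v\mapsto\mathcal{L}_{\text{A}}(v,\mu)$ is strictly convex and coercive (strong convexity of $F$ from \eqref{eq:aprop2}; convexity of $v\mapsto\|[Bv-\gamma^{-1}\mu]_+\|^2$) and $\mu\mapsto\mathcal{L}_{\text{A}}(v,\mu)$ concave (pointwise, $t\mapsto\frac{\gamma}{2}[c-\gamma^{-1}t]_+^2-\frac{1}{2\gamma}t^2$ is $C^1$ and concave), the anti--coercivity in $\mu$ following from the surjectivity \eqref{eq:disc_infsup}.

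For the a priori bound I would first test \eqref{eq:EulerLag_ineq} with $(v,\mu)=(u_h,\lambda_h)$. Abbreviating $P:=[Bu_h-\gamma^{-1}\lambda_h]_+$ and using the pointwise identity $[w]_+w=[w]_+^2$ recorded in the excerpt, the nonlinear term contributes $\gamma\|P\|^2$ and the final term $-\gamma^{-1}\|\lambda_h\|^2$, so by \eqref{eq:aprop1}
\[
\alpha\|u_h\|_V^2+\gamma\|P\|^2-\gamma^{-1}\|\lambda_h\|^2\leq\langle f,u_h\rangle_{V',V}.
\]
The indefinite term is removed using the ``switch'' equation, i.e.\ \eqref{eq:EulerLag_ineq} with $(v,\mu)=(0,\mu)$, which reads $\langle P+\gamma^{-1}\lambda_h,\mu\rangle=0$ for all $\mu\in H_h'$ and hence, since $\lambda_h\in H_h'$, $\gamma^{-1}\lambda_h=-\pi_{H_h}P$ (the discrete analogue of \eqref{eq:lambda0}). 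Therefore $\gamma^{-1}\|\lambda_h\|^2=\gamma\|\pi_{H_h}P\|^2$, and by orthogonality $\gamma\|P\|^2-\gamma^{-1}\|\lambda_h\|^2=\gamma\|(I-\pi_{H_h})P\|^2$; recognising $(I-\pi_{H_h})P=[Bu_h-\gamma^{-1}\lambda_h]_++\gamma^{-1}\lambda_h$, using $\gamma\|\cdot\|^2=\gamma_0\|\cdot\|_{H_h}^2$ (by \eqref{eq:gamma_scale}) and a Young inequality on $\langle f,u_h\rangle$ gives
\[
\|u_h\|_V+\gamma_0^{1/2}\bigl\|[Bu_h-\gamma^{-1}\lambda_h]_++\gamma^{-1}\lambda_h\bigr\|_{H_h}\lesssim\|f\|_{V'},
\]
which is the first two terms of the boxed estimate.

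It remains to bound $\gamma_0^{-1/2}\|\lambda_h\|_{H_h'}$, and here I would use the discrete surjectivity \eqref{eq:disc_infsup}: choose $v_h\in V_h$ with $\pi_{H_h}Bv_h=\lambda_h$ and $\|v_h\|_V+\|Bv_h\|_{H_h}\lesssim\|\lambda_h\|_{H_h}$, so $\|\lambda_h\|^2=\langle\lambda_h,Bv_h\rangle$. Writing $\lambda_h=-\gamma\pi_{H_h}P$, using the splitting $\langle\pi_{H_h}P,Bv_h\rangle=\langle P,Bv_h\rangle-\langle(I-\pi_{H_h})P,(I-\pi_{H_h})Bv_h\rangle$ and \eqref{eq:EulerLag_ineq} with $(v,\mu)=(v_h,0)$ to replace $\gamma\langle P,Bv_h\rangle$ by $\langle f,v_h\rangle-a(u_h;v_h)$, one finds
\[
\|\lambda_h\|^2=a(u_h;v_h)-\langle f,v_h\rangle+\gamma\langle(I-\pi_{H_h})P,(I-\pi_{H_h})Bv_h\rangle.
\]
Bounding $|a(u_h;v_h)-\langle f,v_h\rangle|\lesssim(\|u_h\|_V+\|f\|_{V'})\|v_h\|_V$ via \eqref{eq:aprop3} (after folding the constant $\partial_u F(0)$ into $f$), the last term by $\gamma\|(I-\pi_{H_h})P\|\,\|Bv_h\|$ with $\|Bv_h\|\lesssim\|\lambda_h\|$, cancelling one power of $\|\lambda_h\|$ and re--inserting the $h$-- and $\gamma_0$--weights yields $\gamma_0^{-1/2}\|\lambda_h\|_{H_h'}\lesssim\|u_h\|_V+\|f\|_{V'}+\gamma_0^{1/2}\|(I-\pi_{H_h})P\|_{H_h}\lesssim\|f\|_{V'}$ by the previous step (for $\gamma_0\gtrsim 1$). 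Adding the three bounds gives the boxed estimate.

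Uniqueness follows the same pattern: for two solutions subtract the equations, test with the difference $(u_1-u_2,\lambda_1-\lambda_2)$, and combine \eqref{eq:aprop2}, the monotonicity $([w_1]_+-[w_2]_+)(w_1-w_2)\geq([w_1]_+-[w_2]_+)^2$, and the difference of the switch equations exactly as above to cancel $-\gamma^{-1}\|\lambda_1-\lambda_2\|^2$; this forces $u_1=u_2$ and $(I-\pi_{H_h})([w_1]_+-[w_2]_+)=0$, and then testing the difference of the $\mu=0$ equations against the \eqref{eq:disc_infsup}--representative of $\lambda_1-\lambda_2$ gives $\lambda_1=\lambda_2$. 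The one genuine obstacle throughout is the sign--indefinite $-\frac{1}{2\gamma}\|\mu\|^2$ term carried by the augmented Lagrangian; the structural fact that makes every estimate close is that on solutions the switch equation forces $\gamma^{-1}\lambda_h=-\pi_{H_h}[Bu_h-\gamma^{-1}\lambda_h]_+$, so that this term is exactly reabsorbed by the projected part of the penalty term.
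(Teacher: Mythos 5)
Your a priori bound and uniqueness arguments are correct, and they reach exactly the paper's estimate by a genuinely different mechanism. The paper tests with $(w+\alpha_\xi\xi(\eta),-\eta)$ and completes the square, obtaining the stability identity \eqref{eq:positive_A} and then \eqref{eq:apriori0} for \emph{all} pairs $(w,\eta)\in V_h\times H_h'$; you instead test with $(u_h,\lambda_h)$ and absorb the indefinite term $-\gamma^{-1}\|\lambda_h\|^2$ by invoking the discrete switch relation $\gamma^{-1}\lambda_h=-\pi_H[Bu_h-\gamma^{-1}\lambda_h]_+$ (the $\mu$-equation) together with Pythagoras, which produces the same quantity $\gamma_0\|[Bu_h-\gamma^{-1}\lambda_h]_++\gamma^{-1}\lambda_h\|^2_{H_h}$ as the $L^2$-orthogonal complement $\gamma\|(I-\pi_H)P\|^2$. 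This is cleaner on solutions, and your recovery of the multiplier via \eqref{eq:disc_infsup} (choosing $v_h$ with $\pi_HBv_h=\lambda_h$ rather than the paper's $\xi$ with $\pi_HB\xi=-\gamma^{-1}\eta$) is the same estimate up to a $\gamma$-rescaling; I checked the weights and the final bound $\gamma_0^{-1/2}\|\lambda_h\|_{H_h'}\lesssim\|f\|_{V'}$ does come out for $\gamma_0\gtrsim1$, matching the paper's hypothesis $\gamma_0\ge1$. The trade-off is that your coercivity statement holds only on solutions, whereas the paper's \eqref{eq:apriori0} is a genuine stability estimate for arbitrary arguments, which is precisely what feeds its Brouwer/Lions existence argument.

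That trade-off is where your sketch has its one soft spot: existence. In the degree-theoretic route, uniform a priori bounds on solutions of the homotopy family plus uniqueness of the trivial solution at $t=0$ do not by themselves give $\deg(\Phi_0,B_R,0)\neq0$; the base map still carries the sign-indefinite $-\gamma^{-1}\left<\eta,\mu\right>$ block and is not monotone, so you would need a further homotopy (e.g.\ deforming $[\cdot]_+$ to the identity, landing on the linear equality-constrained augmented Lagrangian, which is an isomorphism by \eqref{eq:disc_infsup} and hence has degree $\pm1$), with a priori bounds along that deformation too. In the minimax route, note that $\mu\mapsto\mathcal{L}_{\text{A}}(v,\mu)$ is \emph{not} anti-coercive for fixed $v$ (for $\mu\to-\infty$ the two quadratic terms cancel and only linear growth remains), so you must use anti-coercivity of $\inf_v\mathcal{L}_{\text{A}}(v,\cdot)$ via \eqref{eq:disc_infsup}, as you hint. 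Both fixes are standard in finite dimensions, and the paper's own existence paragraph is comparably terse, but as written your existence step is the only part that does not yet close.
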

\begin{proof}
If we can show that the operator $A$ is continuous and satisfies a stability condition then
existence follows using Brouwer's fixed point theorem and the arguments of \cite[Chapter 2, Theorem 4.3]{Lions69} (see also \cite[Proposition 4.3]{BuHaLa19} for a discussion of finite element methods and augmented Lagrangian methods). First note that continuity of $A$ follows by \eqref{eq:Bcontinuity} and \eqref{eq:aprop3}. Since $h$ is fixed there is no need for the constant of the continuity to be independent of $h$. Existence of discrete solutions follow from the stability estimate, for all $w,
\eta \in V_h \times H_h'$,
\begin{equation}\label{eq:apriori0}
\boxed{A[(w,\eta),(w + \alpha_\xi \xi,-\eta)] \ge \frac12 \alpha \|w\|^2_V + \frac12
\gamma_0 \|[Bw- \gamma^{-1} \eta
 ]_+ + \gamma^{-1}\eta \|_{H_h}^2 + \frac12 \gamma_0^{-1} \alpha_\xi \|\eta\|_{H_h'}^2}
\end{equation}
where $\xi \in V_h$ is a function such that 
\begin{equation}\label{eq:multstab_partner}
\left<B \xi(\eta) , q_h\right>= -\left< \eta/\gamma  ,
  q_h\right>, \mbox{ for all } q_h \in H_h' \mbox{ and } \|\xi\|_V \lesssim \gamma_0^{-1} \|\eta\|_{H_h'}
  \end{equation}
   (c.f \eqref{eq:disc_infsup}), $\gamma_0\ge 1$ and $\alpha_\xi = 1/2 \min(C_{4.6}^{-2}, C_{4.12}^{-2} \min(1,\gamma_0 \alpha)$ where $C_{4.6}$ and $C_{4.12}$ are the constants in the bounds \eqref{eq:disc_infsup} and \eqref{eq:aprop3} respectively. The bound \eqref{eq:apriori_discrete} follows from \eqref{eq:apriori0} since for a solution $(u_h,\lambda_h)$ there holds
\[
A[(u_h,\lambda_h),(u_h + \alpha_\xi \xi(\lambda_h),-\lambda_h)] = \left<f, u_h + \alpha_\xi\xi(\lambda_h)\right>_{V,V'}
\]
Using the duality pairing we see that
\[
\left<f, u_h + \alpha_\xi\xi(\lambda_h)\right>_{V,V'} \leq \|f\|_{V'}(\|u_h\|_V + \alpha_\xi\|\xi(\lambda_h)\|_V) \lesssim \|f\|_{V'}(\|u_h\|_V + \alpha_\xi\gamma_0^{-1} \|\lambda_h\|_{H_h'})
\]
and the claim follows.

To show \eqref{eq:apriori0} observe that by 
testing with $(v,\mu) = (w,-\eta)$ we have
\begin{equation}
A[(w,\eta),(w,-\eta)] = a(w; w) + \gamma_0^{-1}\|\eta\|_{H_h'}^2
+ \left<\gamma [Bw\sign\eta/\gamma]_+ , Bw + \eta/
  \gamma\right>
\end{equation}
By completing the square we see that
\begin{equation}
\gamma^{-1} \|\eta\|_{L^2}^2 +  \left<\gamma [Bw \sign \eta/\gamma]_+ , Bw + \eta/
  \gamma\right> =  \gamma_0 \|[Bw\sign\eta/
  \gamma]_+ + \gamma^{-1}\eta\|_{H_h}^2
\end{equation}
We conclude that $A$ satisfies the following positivity property, for all
$(w,\eta) \in V_h \times H_h'$,
\begin{equation}\label{eq:positive_A}
A[(w,\eta),(w,-\eta)] = a(w; w) + \gamma_0 \|[Bw\sign\eta/(2
  \gamma)]_+ + \gamma^{-1}\eta \|_{H_h}^2
\end{equation}
Then, since $\eta \in H_h'$ we can use \eqref{eq:disc_infsup} to choose $\xi(\eta) \in V_h$ satisfying \eqref{eq:multstab_partner},
and test with $v =\xi$ and $\mu=0$ to
obtain 
\begin{align}
A[(w,\eta),(\xi,0)] &= a(w;\xi)+ \gamma \left<[Bw\sign\eta/\gamma]_+  
,  B \xi(\eta)\right> 
\end{align}
Now observe that
\begin{multline}
\gamma \left<[Bw\sign\eta/\gamma]_+,  B \xi(\eta)\right>  =\gamma \left<[Bw\sign\eta/\gamma]_+ +
\gamma^{-1} \eta,  B \xi(\eta)\right> - \underbrace{\left< \eta,  B \xi(\eta)\right>}_{ = -\gamma_0^{-1} \|\eta\|_{H_h'}^2} \\
\ge \gamma_0^{-1} \|\eta\|_{H_h'}^2 - \frac12 C_{4.6}^2 \gamma_0 \| [Bw+\eta/\gamma]_+
+ \gamma^{-1} \eta\|_{H_h}^2 - \frac12 \gamma_0 C_{4.6}^{-2} \| B \xi(\eta)\|_{H_h}^2
\end{multline}
and since $\gamma_0^{\frac12} \|B \xi(\eta)\|_{H_h} \leq C_{4.6} \gamma_0^{\frac12} \|\eta/\gamma\|_{H_h} = C_{4.6} \gamma_0^{-\frac12} \|\eta\|_{H_h'}$
we see that
\[
\gamma \left<[Bw\sign\eta/\gamma]_+,  B \xi(\eta)\right> \ge \frac12 \gamma_0^{-1} \|\eta\|_{H_h'}^2  - \frac12 C_{4.6}^2 \gamma_0 \| [Bw+\eta/\gamma]_+
- \gamma^{-1} \eta\|_{H_h}^2
\]
Combining \eqref{eq:disc_infsup} with \eqref{eq:aprop3} we see that using the boundedness $a(w;\xi) \leq C_{4.12} \|w\|_V \|\eta/\gamma\|_{H_h} \leq C_{4.12} \gamma_0^{-1} \|w\|_V \|\eta\|_{H_h'}$
\begin{multline}\label{eq:positive_eta}
a(w;\alpha_\xi \xi)+ \gamma \left<[Bw\sign\eta/\gamma]_+ +
\gamma^{-1} \eta, \alpha_\xi  B \xi(\eta)\right> \ge - \gamma_0^{-1} \alpha_\xi C_{4.12}^2 \|w\|_V^2 \\-  \alpha_\xi C_{4.6}^2 \gamma_0 \| [Bw+\eta/\gamma]_+
+ \gamma^{-1} \eta\|_{H_h}^2 + \frac12 \gamma_0^{-1} \alpha_\xi \|\eta\|_{H_h'}^2 
\end{multline}
The desired inequality then follow by adding \eqref{eq:positive_A} and \eqref{eq:positive_eta}  for $\gamma_0 \ge 1$ and $$\alpha_\xi = 1/2 \min(C_{4.6}^{-2}, C_{4.12}^{-2}) \min(1,\gamma_0 \alpha).$$

If $H\equiv L^2$ the analysis can be extended to the continuous case, for details see \cite[Chapter 1, Lemma 4.3]{Lions69}.

Uniqueness follows in principle from \cite[Chapter 2, Theorem
2.2]{Lions69}, but for completeness we give a simple proof below.
Considering the nonlinearity expressing the constraint we have using
the monotonicity $([a]_+-[b]_+)(a - b) \ge ([a]_+-[b]_+)^2$, and
setting, $e=w_1-w_2$ and $\zeta = \eta_1-\eta_2$,
\begin{align}
&\left<\gamma ([Bw_1\sign \eta_1/\gamma]_+ - [Bw_2\sign \eta_2/\gamma]_+,  Be + \zeta 
  \gamma \right> + \left<\gamma^{-1} (\eta_1 - \eta_2), \zeta  \right> \nonumber
 \\
&\qquad =  \left<\gamma ([Bw_1\sign\eta_1/\gamma]_+ - [Bw_2\sign\eta_2/\gamma]_+, Be \sign \zeta /
 \gamma\right> \nonumber
 \\
&\qquad \qquad  +2 \left<\gamma ([Bw_1\sign\eta_1/\gamma]_+ - [Bw_2\sign\eta_2/\gamma]_+,  \zeta /
  \gamma\right> 
   + \gamma_0^{-1} \| \zeta  \|^2_{H_h'} \nonumber
   \\
   &\qquad
 \ge \gamma_0 \|[Bw_1\sign\eta_1/\gamma]_+ - [Bw_2\sign\eta_2/\gamma]_+ + \gamma^{-1}\zeta  \|_{H_h}^2. \label{eq:Bmono}
\end{align}

It follows from \eqref{eq:aprop2} and \eqref{eq:Bmono} that 
\begin{equation}
\boxed{
E_C[(w_1,\eta_1),(w_2,\eta_2)]^2 +  \alpha \|e\|_V^2 
\leq A[(w_1,\eta_1),(e,-\zeta )] - A[(w_2,\eta_2),(e,-\zeta
  )] 
  } 
  \label{eq:pert_1}
\end{equation}
where $E_C$ is the error in the approximation of the contact zone defined by
\[
E_C[(w_1,\eta_1),(w_2,\eta_2)] := \gamma_0^{\frac12} \|[B w_1+\eta_1/\gamma]_+ -
  [B w_2+\eta_2/\gamma]_+ + \gamma^{-1}\zeta  \|_{H_h}
\]
If we assume that both $\{w_1,\eta_1\}$ and $\{w_2,\eta_2\}$ are
solutions to \eqref{eq:EulerLag_ineq} it follows that the right hand side of \eqref{eq:pert_1}
is zero and 
\[
E_C[(w_1,\eta_1),(w_2,\eta_2)]^2 +  \alpha \|e\|_V^2 = 0.
\]
It follows that $e=0$ and the primal solution is unique. To see that also
the multiplier is unique once again choose $\xi(\eta)$ such that $B
\xi(\eta) = -\eta/\gamma$, in the sense that
$\left<B \xi(\zeta) , q_h\right>= -\left< \zeta /\gamma ,
  q_h\right>$, for all $q_h \in H_h'$,
and test with $v = \xi$ and $\mu=0$, and use
arguments similar as those leading to \eqref{eq:apriori0} to see that
\begin{align}
0 = &A[(w_1,\eta_1),(\xi(\zeta) ,0)] - A[(w_2,\eta_1),(\xi(\zeta) ,0)] \nonumber
\\
&\qquad 
\ge 
-C^2\gamma_0^{-1} \underbrace{\|e\|_V ^2}_{I_1} + \frac12 \gamma_0^{-1} \|\zeta\|^2_{H_h'}  -  \gamma_0 C^2 \underbrace{E_C[(w_1,\eta_1),(w_2,\eta_2)]^2}_{I_2}\label{eq:pert_2}
\end{align}
We have already shown in \eqref{eq:pert_1} that $I_1 = I_2 = 0$ if both $\{w_1,\eta_1\}$
and $\{w_2, \eta_2\}$ are solutions, hence we conclude that $\|\zeta\|_{H_h'} =
0$ which finishes the discussion of (discrete) well-posedness. 
\end{proof}
\subsubsection{Best approximation results}
In this section we will derive a best approximation result for the solution of \eqref{eq:EulerLag_ineq}.
Due to the nonconforming character of the ALM we need to assume that the multiplier is in $H' \cap L^2$. By specifying the approximation properties of our finite element spaces optimal a priori error estimates can be deduced.
\begin{proposition}
Assume that \eqref{eq:cont_infsup}-\eqref{eq:disc_infsup} and \eqref{eq:aprop1}-\eqref{eq:aprop3} hold. Let $(u,\lambda) \in V \times (H' \cap L^2)$ be the solution to \eqref{eq:cont_min_ineq}-\eqref{eq:Kuhn0} and $(u_h,\lambda_h) \in V_h \times H_h'$ be the solution of \eqref{eq:EulerLag_ineq}. Then if $\Phi[(u,\lambda),(u_h,\lambda_h)] := E_C[(u,\lambda),(u_h,\lambda_h)] + \|u - u_h\|_V + \gamma_0^{-\frac12} \|\lambda - \lambda_h\|_{H_h'}$ then there holds

\begin{equation}\label{eq:best_approx1}
\boxed{\Phi[(u,\lambda),(u_h,\lambda_h)] \lesssim \inf_{(v_h,\mu_h) \in V_h \times H_h'} (\|u - v_h\|_V + \gamma_0^{\frac12} \|B ( u - v_h)\|_{H_h} + \gamma_0^{-\frac12} \|\lambda - \mu_h\|_{H_h'})}
\end{equation}
\end{proposition}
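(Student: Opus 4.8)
The plan is a Strang/C\'ea-type argument organised around the Galerkin orthogonality coming from \eqref{eq:AFEM_form} and \eqref{eq:EulerLag_ineq}, but \emph{not} via the standard ``coercivity $+$ continuity'' recipe: the error measure $\Phi$ does not contain $\|B(u-u_h)\|_{H_h}$, and, as the a priori bound \eqref{eq:apriori_discrete} already indicates, the method does not control this norm, so estimating the nonlinear term through the continuity \eqref{eq:Bcontinuity}, which would generate the factor $\|B(v_h-u_h)\|_{H_h}$, is doomed. Instead one exploits the monotone/Lipschitz structure of $[\,\cdot\,]_+$ directly. Fix an arbitrary pair $(v_h,\mu_h)\in V_h\times H_h'$ and put $e_h:=v_h-u_h\in V_h$, $\zeta_h:=\mu_h-\lambda_h\in H_h'$. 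Subtracting \eqref{eq:EulerLag_ineq} from the consistency identity of \eqref{eq:AFEM_form} yields $A[(u,\lambda);(v,\mu)]=A[(u_h,\lambda_h);(v,\mu)]$ for all $(v,\mu)\in V_h\times H_h'$; I would test this with $(v,\mu)=(e_h,-\zeta_h)$ and expand both occurrences of $A$ using its definition.

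This produces an error equation that naturally splits into three groups of terms. For the terms stemming from $a(\cdot;\cdot)$, writing $e_h=(u-u_h)-(u-v_h)$ and using the monotonicity \eqref{eq:aprop2} and continuity \eqref{eq:aprop3} extracts a coercive $\alpha\|u-u_h\|_V^2$ while giving up only $C\|u-u_h\|_V\,\|u-v_h\|_V$. For the multiplier term $\gamma^{-1}\langle\lambda-\lambda_h,\zeta_h\rangle$, the splitting $\lambda-\lambda_h=(\lambda-\mu_h)+\zeta_h$ isolates the coercive $\gamma^{-1}\|\zeta_h\|^2=\gamma_0^{-1}\|\zeta_h\|_{H_h'}^2$ plus a remainder $\gamma^{-1}\langle\lambda-\mu_h,\zeta_h\rangle$. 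The crux is the contact term $\langle\gamma([Bu\sign\gamma^{-1}\lambda]_+-[Bu_h\sign\gamma^{-1}\lambda_h]_+),\,Be_h+\gamma^{-1}\zeta_h\rangle$. Abbreviating $a_w:=Bw\sign\gamma^{-1}\eta$ for the argument of the positive part, I would use the identity
\[
Be_h+\gamma^{-1}\zeta_h=\underbrace{\big(B(v_h-u)+\gamma^{-1}(\mu_h-\lambda)\big)}_{=:\,e_{\mathrm{app}}}+(a_u-a_{u_h})+\tfrac{2}{\gamma}(\lambda-\lambda_h),
\]
so that the middle piece, paired with $\gamma([a_u]_+-[a_{u_h}]_+)$, gives via $\langle[a]_+-[b]_+,a-b\rangle\ge\|[a]_+-[b]_+\|^2$ the coercive term $\gamma_0\,\|[a_u]_+-[a_{u_h}]_+\|_{H_h}^2\ge 0$. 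The essential point is that the ingredient $B(u-u_h)$ hidden in $a_u-a_{u_h}$ lives only inside this sign-definite contribution and is never bounded from above; the remaining pieces $e_{\mathrm{app}}$ (a pure approximation error) and $\gamma^{-1}(\lambda-\lambda_h)=\gamma^{-1}(\lambda-\mu_h)+\gamma^{-1}\zeta_h$ (approximation error plus multiplier error) are handled by Cauchy--Schwarz and the discrete duality $\langle v,\mu\rangle\le\|v\|_{H_h}\|\mu\|_{H_h'}$.

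Collecting everything and applying Young's inequality with suitably small parameters — the constants depending only on $\alpha$, $C$ and, harmlessly, on the fixed $\gamma_0$ — absorbs all discrete-error quantities into the left-hand side, leaving
\[
\|u-u_h\|_V^2+\gamma_0\,\|[a_u]_+-[a_{u_h}]_+\|_{H_h}^2+\gamma_0^{-1}\|\zeta_h\|_{H_h'}^2\lesssim\|u-v_h\|_V^2+\gamma_0\|B(u-v_h)\|_{H_h}^2+\gamma_0^{-1}\|\lambda-\mu_h\|_{H_h'}^2,
\]
having used $\gamma_0\|e_{\mathrm{app}}\|_{H_h}^2\lesssim\gamma_0\|B(u-v_h)\|_{H_h}^2+\gamma_0^{-1}\|\lambda-\mu_h\|_{H_h'}^2$. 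It then remains to recover the three pieces of $\Phi$: $\|u-u_h\|_V$ is already present; $\gamma_0^{-1/2}\|\lambda-\lambda_h\|_{H_h'}\le\gamma_0^{-1/2}\|\lambda-\mu_h\|_{H_h'}+\gamma_0^{-1/2}\|\zeta_h\|_{H_h'}$; and, writing out the definition of $E_C$ and applying the triangle inequality, $E_C[(u,\lambda),(u_h,\lambda_h)]\le\gamma_0^{1/2}\|[a_u]_+-[a_{u_h}]_+\|_{H_h}+\gamma_0^{-1/2}\|\lambda-\lambda_h\|_{H_h'}$, both terms of which are now controlled. Taking the infimum over $(v_h,\mu_h)\in V_h\times H_h'$ yields \eqref{eq:best_approx1}.

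The step I expect to be the main obstacle is the contact term: one has to hit on exactly the decomposition above so that the ``dangerous'' quantity $\|B(u-u_h)\|_{H_h}$ stays confined to the monotone, sign-definite part, and the $[\,\cdot\,]_+$/$[\,\cdot\,]_-$/$\sign$ bookkeeping (together with the relation $[Bu\sign\gamma^{-1}\lambda]_++\gamma^{-1}\lambda=0$ from \eqref{eq:lambda0} that underlies consistency) must be done carefully. I would also remark that the discrete inf--sup and Fortin hypotheses \eqref{eq:Fortin_interp}--\eqref{eq:disc_infsup} are not in fact invoked in this estimate beyond guaranteeing that $(u_h,\lambda_h)$ exists (Proposition~2); if one prefers not to rely on the coercive multiplier contribution produced automatically by the error equation, the norm $\gamma_0^{-1/2}\|\zeta_h\|_{H_h'}$ can instead be recovered by an additional test with the partner $\xi(\zeta_h)$ from \eqref{eq:disc_infsup}, exactly as in the proof of Proposition~2.
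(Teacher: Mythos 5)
Your starting point is the same as the paper's: subtract the consistency identity from \eqref{eq:EulerLag_ineq}, use the monotonicity of $a$ and of $[\cdot]_+$ so that $B(u-u_h)$ only ever appears inside the sign-definite contact contribution, and treat the rest by Cauchy--Schwarz and the discrete duality. However, the absorption step at the heart of your argument fails, in two places. First, after your decomposition of $Be_h+\gamma^{-1}\zeta_h$, the third piece produces the cross term $2\langle [a_u]_+-[a_{u_h}]_+,\lambda-\lambda_h\rangle$, whose coefficient exactly saturates the two available coercive quantities $\gamma\,\|[a_u]_+-[a_{u_h}]_+\|^2$ (from monotonicity) and $\gamma^{-1}\|\lambda-\lambda_h\|^2$ (from the multiplier block): the sum of the three is precisely the complete square $\gamma\|[a_u]_+-[a_{u_h}]_++\gamma^{-1}(\lambda-\lambda_h)\|^2=E_C^2$, with nothing to spare. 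There is no ``suitably small parameter'': $2ab\le\theta a^2+\theta^{-1}b^2$ cannot have both weights below $1$. Consequently your displayed intermediate inequality, which keeps $\gamma_0\|[a_u]_+-[a_{u_h}]_+\|_{H_h}^2$ and $\gamma_0^{-1}\|\zeta_h\|_{H_h'}^2$ \emph{separately} on the left, is not reachable by this route; only the combination $E_C$ survives --- which is exactly why the paper defines $E_C$ with the $+\gamma^{-1}\zeta$ inside the norm (see \eqref{eq:Bmono} and \eqref{eq:pert_1}).

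Second, once only $E_C$ is coercive, the term $\gamma\langle [a_u]_+-[a_{u_h}]_+,\,e_{\mathrm{app}}\rangle$ can no longer be handled for a \emph{generic} pair $(v_h,\mu_h)$: writing $\gamma([a_u]_+-[a_{u_h}]_+)=\gamma([a_u]_+-[a_{u_h}]_++\gamma^{-1}\zeta)-\zeta$ with $\zeta=\lambda-\lambda_h$ leaves the pairing $\langle\lambda-\lambda_h,\,B(u-v_h)+(\lambda-\mu_h)/\gamma\rangle$, which requires control of $\|\lambda-\lambda_h\|_{H_h'}$ --- the very quantity that is only estimated afterwards. This is precisely where the paper's choice $v_h=i_Fu$, $\mu_h=\pi_H\lambda$ is essential: the Fortin property \eqref{eq:Fortin_interp} and the $L^2$-orthogonality of $\pi_H$ give $\langle\pi_H\zeta,\,B(u-i_Fu)+(\lambda-\pi_H\lambda)/\gamma\rangle=0$, turning this dangerous pairing into a pure approximation quantity (cf.\ \eqref{eq:cont_best_app}); a general $v_h$ is recovered only at the end through the stability of $i_F$. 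For the same reason your closing remark that the Fortin/inf-sup hypotheses are ``not invoked beyond existence'' is incorrect: \eqref{eq:Fortin_interp} is indispensable for the bound on $E_C+\|e\|_V$, and the multiplier bound $\gamma_0^{-1/2}\|\lambda-\lambda_h\|_{H_h'}$ genuinely requires the separate inf-sup step with the auxiliary function $\xi(\gamma^{-1}\pi_H\zeta)$ from \eqref{eq:disc_infsup}; it does not fall out of the error equation tested with $(e_h,-\zeta_h)$ alone.
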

\begin{proof}
Since \eqref{eq:pert_1} holds for all $w_1,w_2 \in V$ and $\zeta \in H'\cap L^2$, if the exact solution $u,\lambda \in V \times H'\cap L^2$ we may apply it with $w_1= u$, $\eta_1 = \lambda$ and  $w_2=u_h$, $\eta_2 = \lambda_h$ to obtain, with $e = u - u_h$ and $\zeta = \lambda - \lambda_h$, 
\begin{equation}
E_C[(u,\lambda),(u_h,\lambda_h)]^2 + \alpha \|e\|_V^2
\leq A[(u,\lambda),(e,-\zeta )] - A[(u_h,\lambda_h),(e,-\zeta
  )] \label{eq:error_1}
\end{equation}
Using the consistency of the method we have
\[
A[(u,\lambda),(e,-\zeta )] - A[(u_h,\lambda_h),(e,-\zeta
  )] = A[(u,\lambda),(u - i_F u,\pi_H \lambda - \lambda )] - A[(u_h,\lambda_h),(u - i_F u,\pi_H \lambda - \lambda)]
\]
By the continuity of $a$ we have
\[
a(u;u - i_F u) - a(u_h,u - i_F u) \leq C \|e\|_V \|u - i_F u\|_V  
\]
For the nonlinearity imposing the constraint we notice that by the $L^2$-orthogonality of $\pi_H$,
\[
\gamma^{-1} \left<\zeta,  \lambda - \pi_H \lambda \right> = \gamma_0^{-1} \|\lambda - \pi_H \lambda\|_{H_h'}^2
\]
and using in addition the properties of $i_F u$ we have $\left<\pi_H \zeta,  B ( u - i_F u) + (\lambda - \pi_H \lambda)/\gamma \right> = 0$ and hence using that $\pi_H \zeta = \zeta + \pi_H \zeta - \zeta   = \zeta - (\lambda -\pi_H \lambda)$,
\begin{multline}\label{eq:cont_best_app}
\left<\gamma ([Bu +\lambda/\gamma]_+ - [B u_h +\lambda_h/\gamma]_+), B ( u - i_F u) + (\lambda - \pi_H \lambda)/\gamma \right> \\
= \left<\gamma ([B u+\lambda/\gamma]_+ - [B u_h+\lambda_h/\gamma]_+) + \zeta, B ( u - i_F u) + (\lambda - \pi_H \lambda)/\gamma \right> \\
- \left<\lambda - \pi_H \lambda, B ( u - i_F u) + (\lambda - \pi_H \lambda)/\gamma \right>
\end{multline}
Collecting the above inequalities we obtain using the Cauchy-Schwarz inequality and the arithmetic-geometric inequality in each right hand side,
\begin{multline*}
A[(u,\lambda),(e,-\zeta )] - A[(u_h,\lambda_h),(e,-\zeta
  )] \leq \frac12  (E_C[(u,\lambda),(u_h,\lambda_h)]^2 + \alpha \|e\|_V^2) \\
  + \frac{C}{\alpha} (\|u - i_F u\|_V^2 + \gamma_0 \|B ( u - i_F u)\|_{H_h}^2+ \gamma_0^{-1} \|\lambda - \pi_H \lambda\|_{H_h'}^2)
\end{multline*}
It follows that the following error bound holds,
\[
 E_C[(u,\lambda),(u_h,\lambda_h)]^2 + \alpha \|e\|_V^2  \lesssim \alpha \|u - i_F u\|_V^2 + \gamma_0 \|B ( u - i_F u)\|^2_{H_h}+ \gamma_0^{-1} \|\lambda - \pi_H \lambda\|_{H_h'}^2
\]
By adding and subtracting $v_h$, applying the triangle inequality followed by the stability of the Fortin operator (right inequality of \eqref{eq:Fortin_interp}) there holds
\[
\|u - i_F u\|_V + \gamma_0^{\frac12} \|B ( u - i_F u)\|_{H_h} \lesssim \|u - v_h\|_V + \gamma_0^{\frac12} \|B ( u - v_h)\|_{H_h}
\]
and we conclude using also the definition of the $L^2$-projection $\pi_H$, that
\begin{equation}\label{eq:best_approx2}
\boxed{ E_C[(u,\lambda),(u_h,\lambda_h)] + \|e\|_V \lesssim \inf_{(v_h,\mu_h) \in V_h \times H_h'} (\|u - v_h\|_V + \gamma_0^{\frac12} \|B ( u - v_h)\|_{H_h} + \gamma_0^{-1} \|\lambda - \mu_h\|_{H_h'})}
\end{equation}
Turning to the error in the multiplier we have using \eqref{eq:disc_infsup}
\[
\gamma_0^{-1} \|\pi_H \zeta\|_{H_h'}^2 = \left<\pi_H \zeta, B \xi(\gamma^{-1} \pi_H \zeta) \right> 
\]
where $\xi(\gamma^{-1} \pi_H \zeta)$ is defined by \eqref{eq:disc_infsup} with $z_h = \gamma^{-1} \pi_H \zeta$
using the equation we see that
\begin{multline*}
\gamma_0^{-1} \|\pi_H \zeta\|_{H_h'}^2 =  \left<\pi_H \lambda - \lambda, B \xi(\pi_H \zeta) \right> + \left<\gamma ([Bu +\lambda/\gamma]_+ - [B u_h +\lambda`-h/\gamma]_+) + \zeta, B \xi(\pi_H \zeta)\right> \\
+ a(u; \xi(\pi_H \zeta)) - a(u_h; \xi(\pi_H \zeta))
\end{multline*}
Applying the bound \eqref{eq:aprop3} to the last two terms of the right hand side and the Cauchy-Schwarz inequality to the others and applying the stability of \eqref{eq:disc_infsup} we see that
\[
a(u; \xi(\pi_H \eta)) - a(u_h; \xi(\pi_H \eta)) \leq C_{4.12} \|e\|_V \|\xi(\pi_H \eta)\|_V \leq C_{4.12} \|e\|_V \gamma_0^{-\frac12} \|\pi_H \zeta\|_{H_h'},
\]
\[
\left<\pi_H \lambda - \lambda, B \xi(\pi_H \zeta) \right> \leq \gamma_0^{-\frac12} \|\pi_H \lambda - \lambda\|_{H_h'} \gamma_0^{\frac12} \|B \xi(\pi_H \zeta)\|_{H_h} \leq \gamma_0^{-\frac12} \|\pi_H \lambda - \lambda\|_{H_h'} \gamma_0^{-\frac12} \|\pi_H \zeta\|_{H_h'}
\]
and
\[
\left<\gamma ([Bu +\lambda/\gamma]_+ - [B u_h +\lambda_h/\gamma]_+) + \eta, B \xi(\pi_H \eta)\right>  \leq 
E_C[(u,\lambda),(u_h,\lambda_h)] \gamma_0^{-\frac12} \|\pi_H \zeta\|_{H_h'}.
\]
Collecting terms and dividing through by $\gamma_0^{-\frac12} \|\pi_H \zeta\|_{H_h'}$ we have
\[
\gamma_0^{-\frac12}  \|\pi_H \zeta\|_{H_h'} \lesssim E_C[(u,\lambda),(u_h,\lambda_h)] + \|e\|_V +  \gamma_0^{-\frac12}\|\pi_H \lambda - \lambda\|_{H_h'}
\]
We conclude by applying \eqref{eq:best_approx1} to the right hand side and the triangle inequality $\|\zeta\| \leq \|\lambda - \pi_H \lambda\|+ \|\pi_H \zeta\|$ to obtain,
\begin{equation}\label{eq:best_approx3}
\boxed{\gamma_0^{-\frac12} \|\zeta\|_{H_h'} \lesssim \inf_{(v_h,\mu_h) \in V_h \times H_h'} (\|u - v_h\|_V + \gamma_0^{\frac12} \|B ( u - v_h)\|_{H_h}) + \gamma_0^{-\frac12} \|\lambda - \mu_h\|_{H_h'})}
\end{equation}
The claim now follows by combining \eqref{eq:best_approx1} and \eqref{eq:best_approx2}.
\end{proof}
We observe that the natural norm for $\lambda$ here would be $H'$, but that we here consider the corresponding weighted $L^2$-norm $H_h'$ instead. Since this is an $h$-weighted norm, the resulting $L^2$ error estimate is subotimal compared to approximation. Recovering control of the error in the $H'$ norm would require an additional duality argument that is beyond the scope of this work.
\subsubsection{Remark on stabilized methods}
If the discrete spaces $V_h$, $H_h'$ do not satisfy the infsup condition \eqref{eq:Fortin_interp}, one can introduce a stabilization operator $s(\cdot,\cdot)$ which is designed to control the unstable modes.
If a stable pair $V_h$, $\tilde H_h'$, where $\tilde H_h'$ has the same approximation properties as $H_h'$ up to a constant factor, is known, i.e. \eqref{eq:Fortin_interp} and \eqref{eq:disc_infsup} are satisfied for these spaces, then a convenient way of choosing $s$ is by using the following design criteria
\begin{enumerate}
\item Control of unstable modes:
\begin{equation}\label{eq:stab_cont_1}
\gamma_0^{-1/2} \|\mu - \tilde \pi_H \mu\|_{H_h'} \lesssim s(\mu,\mu)^{\frac12}, \quad \forall \mu_h \in H_h' + L^2
\end{equation}
where $\tilde \pi_H$ denotes the $L^2$ projection on $\tilde H_h$.
\item Weak consistency:
\begin{equation}\label{eq:s_weak_cons}
s(\mu - \tilde \pi_H \mu,\mu - \tilde \pi_H \mu ) \sim \gamma_0^{-1} \|\mu - \tilde \pi_H \mu \|_{H_h'}^2, \quad \forall \mu \in L^2
\end{equation}
Here the $\sim$ notation means that the two quantities have the same asymptotics in $h$ for smooth enough $\mu$. 
\end{enumerate}
The simplest choice of $s$ is
\[
s(\eta,\mu) = \gamma^{-1} \left<(\pi_H - \tilde \pi_H) \eta, \mu  \right>
\]
The optimality system of the finite
element formulation then reads: find $(u_h,\lambda_h) \in V_h \times H_h'$
such that
\begin{equation}\label{eq:EulerFEM_stab}
A[(u_h,\lambda_h);(v,\mu)]- s(\lambda_h,\mu) = \left<f,v\right>_{V',V}
\end{equation}
for all $(v,\mu) \in V_h \times H_h'$, with $A$ defined in \eqref{eq:AFEM_form}.

It is then possible to use the monotonicity, the inf-sup stability
\eqref{eq:Fortin_interp} together with \eqref{eq:stab_cont_1} and \eqref{eq:s_weak_cons} to obtain bounds similar to \eqref{eq:best_approx1} for the error of the stabilized Galerkin
  approximation. We only sketch the arguments. The only modification of the stability is that the stabilization operator appears in the left hand side. If $e = u- u_h$ and $\zeta = \lambda-\lambda_h$ then 
 \begin{equation}
E_C[(u,\lambda),(u_h,\lambda_h)]^2 + \alpha \|e\|_V^2+s(\zeta,\zeta)
\leq A[(u,\lambda),(e,-\zeta )] - A[(u_h,\lambda_h),(e,-\zeta
  )] - s(\zeta,-\zeta)
  \label{eq:stab_stab}
\end{equation} 
The key observation to obtain optimal approximation is to use Galerkin orthogonality using $u_h - i_F u$ and $\lambda_h - \tilde \pi_H \lambda$ and then apply a modified continuity estimate. Indeed by the assumptions we have
  $\left<\tilde \pi_H \zeta,  B ( u - i_F u) \right> = 0$ and hence we can modify the continuity \eqref{eq:cont_best_app} the following way,
\begin{multline*}
\left<\gamma ([Bu +\lambda/\gamma]_+ - [B u_h +\lambda_h/\gamma]_+), B ( u - i_F u) + (\lambda - \tilde \pi_H \lambda)/\gamma \right> \\
=\left<\gamma ([Bu +\lambda/\gamma]_+ - [B u_h +\lambda_h/\gamma]_+)+ \tilde \pi_H \zeta , B ( u - i_F u) + (\lambda - \tilde \pi_H \lambda)/\gamma \right> \\
= \left<\gamma ([B u+\lambda/\gamma]_+ - [B u_h+\lambda_h/\gamma]_+) + \zeta, B ( u - i_F u) + (\lambda - \tilde \pi_H \lambda)/\gamma \right> \\
- \left<\zeta -  \tilde \pi_H \zeta  , B( u - i_F u) +(\lambda - \tilde \pi_H \lambda)/\gamma  \right>
\end{multline*} 
where we used that $\left<\tilde \pi_H \zeta  , B( u - i_F u) +(\lambda - \tilde \pi_H \lambda)/\gamma  \right> = 0$.
In this expression all but the last term can be bounded in the same fashion as before. For the last term we apply the Cauchy-Schwarz inequality and then \eqref{eq:stab_cont_1} to see that
\[
\left<\zeta -  \tilde \pi_H \zeta, B( u - i_F u)  + (\lambda - \tilde \pi_H \lambda)/\gamma \right> \leq s(\zeta,\zeta)^{\frac12} (\gamma_0^{\frac12} \|B( u - i_F u)\|_{H_h}+ \gamma_0^{-\frac12} \|\lambda - \tilde \pi_H \lambda\|_{H_h'})
\]
 where now the right hand side is controlled by stability and approximation respectively. This leads to an error estimate for $u - u_h$. The error in the multiplier can also be estimated using that 
$$
\|\zeta\|_{H'_h} \leq \|\tilde \pi_h(\zeta - \zeta_h)\|_{H'_h}+ s(\zeta,\zeta)^{\frac12}
$$
and noting that the first term of the right hand side can be controlled as in the infsup stable case and the second is bounded by \eqref{eq:stab_cont_1}.

\subsection{Eliminating the multiplier}\label{sec:eliminate}
Now we assume that the multiplier can be expressed in the primal
variable through a linear operator $T$ on the continuous level,
i.e. $\lambda = T u$, such that for $v_h \in V_h$, the following inequality that typically is of inverse type, holds
\begin{equation}\label{eq:inv_eq}
 \|T
u_h\|_{H_h'}^2 \leq C_\text{I} \|u_h\|^2_V.
\end{equation}
where  $C_\text{I}$ is a constant that may depend on the mesh geometry, but not on the mesh size. We may then write the Nitsche type form of the equation
\eqref{eq:EulerLag_ineq}:
 find $u_h\in V_h$
such that
\begin{equation}\label{eq:EulerNit}
A[(u_h,T
u_h);(v,T
v)] = \left<f,v\right>_{V',V}
\end{equation}
for all $v \in V_h$,
where $A_h$ was defined in \eqref{eq:AFEM_form}. This formulation,
where the multiplier is eliminated is identified as a nonlinear
GLS method.
For this GLS formulation existence and uniqueness is ensured without any
inf-sup condition \cite[Theorem 3.3]{ChHi13}. Stability is obtained thanks to the continuity of
the $T$ operator, \eqref{eq:inv_eq}.

We now revisit the analysis of the previous section and show that the same results hold for the case when the multiplier has been eliminated.
\subsubsection{Continuity and stability}
We only need to verify \eqref{eq:Bcontinuity} for the method \eqref{eq:EulerNit}. We immediately have for $w_1,w_2,v \in V_h$,
\begin{multline}\label{eq:Nitcontinuity}
\left<\gamma ([Bw_1\sign T w_1/\gamma]_+ - [Bw_2\sign T w_2/\gamma]_+ , Bv + T v/\gamma \right> \\
\lesssim (\|B (w_1 - w_2)\|_{H_h} + \|T (w_1 -
w_2)\|_{H_h'})(\|Bv \|_{H_h} + \|T v\|_{H_h'}) \\
\leq C (\|B (w_1 - w_2)\|_{H_h} + \|w_1 -
w_2\|_{V})(\|Bv \|_{H_h} + \|v\|_{V})
\end{multline}
where we used \eqref{eq:inv_eq} for the second inequality. To prove the a priori estimate that together with the continuity allows for the fixed point analysis we test with $v=u_h$ in \eqref{eq:EulerNit} to obtain using \eqref{eq:aprop1}
\[
\alpha \|u_h\|_V^2 +  \|[B u_h  \sign T u_h/\gamma]_+\|^2_{H_h} - \gamma_0^{-1} \|T u_h\|_{H_h'}^2 \leq A[(u_h,T u_h);(u_h,T u_h)]
\]
Applying \eqref{eq:inv_eq} to the last term of the right hand side we see that
\[
\boxed{(\alpha - C_I/\gamma_0) \|u_h\|_V^2 +  \gamma_0 \|[B u_h  \sign T u_h/\gamma]_+\|^2_{H_h} \leq A[(u_h,T
u_h);(u_h,T
u_h)]}
\]
We conclude that the stability holds for $\gamma_0> C_I/\alpha$. Hence under this condition there exists a discrete solution to \eqref{eq:EulerNit}
\subsubsection{Uniqueness and best approximation estimates}
Uniqueness and best approximation follows using similar arguments, we only detail the best approximation case.
We assume that the exact solution $u$ to \eqref{eq:cont_min_ineq} is sufficiently smooth that
\begin{equation}\label{eq:Nit_consist}
A_h[(u,T
u);(v,T
v)] = \left<f,v\right>_{V',V}, \forall v \in V_h
\end{equation}
Then we may write, $e=u-u_h$ and using the monotonicity of $a$ \eqref{eq:aprop2} and of $[\cdot]_+$ we see that, using the notation $E_C(u,u_h) :=  \gamma_0 \|[Bu \sign T u\gamma]_+ - [B u_h\sign T u_h/\gamma]_+\|_{H_h}^2$,
\begin{equation*}
A_h[(u,T
u);(e,T
e)] - A_h[(u_h,T
u_h);(e,T
e)] \\
\geq \alpha \|e\|_V^2 +  E_C(u,u_h) - \gamma_0^{-1} \|T e\|_{H_h'}^2
\end{equation*}
For the last term of the right hand side observe that
\begin{multline*}
\|T e\|_{H_h'} \leq \|T (u - v_h)\|_{H_h'}+\|T (u_h - v_h)\|_{H_h'} \\
\leq \|T (u - v_h)\|_{H_h'}+C_I^{1/2} \|u_h - v_h\|_{V}\\
\leq \|T (u - v_h)\|_{H_h'}+C_I^{1/2} (\|e\|_{V}+\|u - v_h\|_{V})
\end{multline*}
Hence
\begin{multline}\label{eq:Nit_pert1}
A_h[(u,T
u);(e,T
e)] - A_h[(u_h,T
u_h);(e,T
e)] \\
\geq (\alpha - 3 C_I/\gamma_0) \|e\|_V^2 +  E_C(u,u_h)
- 3 \|T (u - v_h)\|_{H_h'}^2 - 3 C_I \|u - v_h\|_{V}^2
\end{multline}
Fix $\gamma_0 = 6 C_I/\alpha$ so that  $\alpha - 3 C_I/\gamma_0 = \alpha/2$.
Considering the left hand side we have using \eqref{eq:Nit_consist}, for all $v_h \in V_h$
\begin{multline*}
A_h[(u,T u);(e,T e)] - A_h[(u_h,T u_h);(e,T e)]\\ = A_h[(u,T u);(u-v_h,T (u-v_h))] - A_h[(u_h,T u_h);(u-v_h,T (u-v_h))]
\end{multline*}
To conclude we use the continuity \eqref{eq:aprop3} and the arithmetic-geometric inequality,
\[
a(u;u-v_h) - a(u_h;u-v_h) \leq C \|e\|_V \|u-v_h\|_V \leq \frac{\alpha}{4} \|e\|_V^2 + C^2 \|u-v_h\|_V^2
\]
together with the Cauchy-Schwarz inequality and the arithmetic-geometric inequality,
\begin{multline*}
\left<\gamma ([Bu \sign T u\gamma]_+ - [B u_h\sign T u_h/\gamma]_+ , B(u - v_h) + T (u - v_h)/\gamma \right> \\
\leq \frac12 E_C(u,u_h) + \gamma_0 \|B(u - v_h)\|_{H_h}^2 + \gamma_0^{-1} \|T (u - v_h)\|_{H_h'}^2.
\end{multline*}

Applying these inequalities in \eqref{eq:Nit_pert1} we see that for all $v_h \in V_h$
\[
\alpha \|e\|_V^2 +  E_C(u,u_h)\\
\lesssim  \|u-v_h\|_V^2 +  \|B(u - v_h)\|_{H_h}^2  +\|T (u - v_h)\|_{H_h'}^2
\]
Taking square roots of both sides and the infimum over $v_h \in V_h$ in the right hand side we conclude
\begin{equation}\label{eq:best_approx_Nit}
\boxed{E_C(u,u_h) +\|e\|_V  \lesssim \inf_{v_h \in V_h} (\|u-v_h\|_V +  \|B(u - v_h)\|_{H_h}  +\|T (u - v_h)\|_{H_h'})}
\end{equation}
We have sketched a best approximation result for the formulation \eqref{eq:EulerNit}. Observe that no condition needs to be imposed on the finite element space in this case. Instead stability is ensured by the inverse inequality \eqref{eq:inv_eq} that bounds the $H_h'$-norm of the multiplier expressed in the primal variable by the $V$-norm of the primal variable. By equivalence of norms on finite dimensional spaces this bound is always true. The key to optimality of the estimate is the proper $h$-scaling of the discrete norms given in \eqref{eq:discrete1} and \eqref{eq:discrete2}.

We now turn to specific examples.

 \section{Applications}\label{sec:concrete}

 \subsection{The Stokes problem with cavitation}\label{sec:cavitation}

Consider a domain $\Omega$ in ${\mathbb{R}}^n$, $n=2$ or $n=3$ with boundary 
$\partial\Omega$ that is composed of the two subsets $\Gamma_D$ and
$\Gamma_N$ such that $\partial\Omega = \bar \Gamma_D \cup
\bar \Gamma_N$. We consider a lubricant with viscosity $\mu$. The Stokes
equation can then be written 
\begin{equation}\label{eq:Stokes}
-\mu\Delta\boldsymbol u +\nabla p = {\boldsymbol f}\;\text{and}\; \nabla\cdot%
\boldsymbol u = 0\quad\text{in}\;\Omega,
\end{equation}
with $\boldsymbol u=0$ on $\Gamma_D$ and
$(-p\boldsymbol I + \mu\nabla \boldsymbol u)\cdot\boldsymbol n =
\boldsymbol 0$ on $\Gamma_N$. Here, 
$\boldsymbol u$ is the velocity of the lubricant, $p$ is the pressure, and ${%
\boldsymbol f}$ is a force term. The lubricant cannot support subatmospheric
pressure, so an additional condition is $p\geq 0$ in $\Omega$. In order to
incorporate this condition into the model, it can be written as a
variational inequality as follows. Let
\[ 
a(\boldsymbol{u},\boldsymbol{v}) := \int_{\Omega}\mu\nabla\boldsymbol{u}:\nabla\boldsymbol{v}\, {\rm d}\Omega , \quad L(\boldsymbol{v}) := \int_{\Omega} \boldsymbol{f}\cdot\boldsymbol{v}\, {\rm d}\Omega 
\]
and
\begin{equation*}
K=\{p\in L_2(\Omega):\quad p\geq 0\} 
\end{equation*}
Seek $\boldsymbol u\in [H_0^1(\Omega)]^n$ and $p\in K$ such that 
\begin{equation}
a(\boldsymbol{u},\boldsymbol{v})
-\int_{\Omega} p\,\nabla\cdot{\boldsymbol v} \, d\Omega = L(\boldsymbol{v}) , 
\end{equation}
for all ${\boldsymbol v}%
\in [H^1(\Omega)]^n$, and
\begin{equation}
-\int_{\Omega}\nabla\cdot\boldsymbol u\, (q-p) \, d\Omega \leq 0,\quad
\forall q\in K
\end{equation}
To rewrite this problem as a variational equality, we use the Kuhn-Tucker conditions
\begin{equation}\label{kuhntucker2}
p\geq 0, \quad \nabla\cdot\boldsymbol{u} \geq 0, \quad p\,\nabla\cdot\boldsymbol{u} = 0 
\end{equation}
and
again replace conditions (\ref{kuhntucker2}) by the equivalent statement
\begin{equation}\label{lambdadef}
p = \gamma_0[\gamma_0^{-1} p -\nabla\cdot\boldsymbol{u}]_+
\end{equation}
with $\gamma_0$ a positive number. We note here that we can identify the abstract spaces $H$ and $H'$ with $L_2(\Omega)$
and that here the pressure cannot easily be interpreted as coming from a linear operator on the velocity, so we are in cases A and D from Section \ref{sec:AGLS}; the pressure has to be retained but $r=0$ in the discrete norms.

Defining function spaces 
\begin{equation}\label{funcV}
V =\{\boldsymbol{v}\in [H^1(\Omega)]^n: \; \boldsymbol{v} =\boldsymbol{0}\;\text{on $\Gamma^\text{D}$}\}, \quad Q = L_2(\Omega)
\end{equation}
and seeking $(\boldsymbol{u},p)\in V\times Q$ 
we seek stationary points to the functional
\begin{equation}\label{augmin}
\mathcal{L}_\text{A}(\boldsymbol{u},p) := \frac12 a(\boldsymbol{u},\boldsymbol{u}) - L(\boldsymbol{u}) + \int_{\Omega}\frac{\gamma_0}{2}\left[\gamma_0^{-1} p-\nabla\cdot\boldsymbol{u}\right]_+^2d\Omega -\int_{\Omega}\frac{1}{2\gamma_0} p^2d\Omega
\end{equation}
analogously to (\ref{eq:compact_aug_final}).

For the discrete problem, we will use the inf--sup stable Taylor-Hood approximation which utilises the finite element space 
\[  
\vec{V}^h = \{\boldsymbol{v}:\boldsymbol{v} \in \left[ C^0(\Omega)\right]^d,\;
\boldsymbol{v}\vert_K\in [ P^2(K)]^d,  
\ \forall K\in {\mathcal T}^h,\;
\text{$\boldsymbol{v}=\boldsymbol{0}$ on $\Gamma^\text{D}$}\} 
\]  
for the velocity, where $P^2(K)$ denotes the space of piecewise quadratic polynomials on $K$, 
and the space 
$Q_h$ of piecewise linears for the pressure: 
\begin{equation} 
  \label{eq:polynomial-Lambda-p-1} 
  Q^{h} = \{p\in C^0(\Omega):\; p\vert_{K} 
\in P^{1}(K),\,\forall K\in {\mathcal T}^h\}.
\end{equation} 
The finite element method based on (\ref{augmin}) is to find $(\boldsymbol{u}^h,p^h)\in \vec{V}^h\times Q^{h}$  such that
\begin{equation}\label{aug1h}
a(\boldsymbol{u}^h,\boldsymbol{v}) -\int_{\Omega}{\gamma_0}[\gamma_0^{-1} p^h-\nabla\cdot\boldsymbol{u}^h ]_+ \nabla\cdot\boldsymbol{v}d\Omega = (\boldsymbol{f},\boldsymbol{v}) \quad \forall \boldsymbol{v}\in\vec{V}^h,
\end{equation}
and
\begin{equation}\label{aug2h}
\int_{\Omega}\left({\gamma_0}[\gamma_0^{-1} p^h-\nabla\cdot\boldsymbol{u}^h ]_+- p^h\right)qd\Omega =0  , \quad \forall q\in Q^{h} .
\end{equation}

\subsubsection{Satisfaction of assumptions for the abstract analysis}
For the present problem we have $V=[H^1(\Omega)]^n$, $H = H_h=H_h' = L^2(\Omega)$. The constraint operator $B$ is the divergence operator.
It if well known that the Taylor-Hood element admits a Fortin interpolant satisfying
\[
\|\nabla \pi_F \boldsymbol{v}\|_\Omega \lesssim \|\nabla \boldsymbol{v}\|_\Omega, \forall \boldsymbol{v} \in [H^1(\Omega)]^d.
\]
Since $\|\nabla \pi_F \boldsymbol{v}\|_\Omega \lesssim \|\nabla \pi_F \boldsymbol{v}\|_\Omega$ the relation \eqref{eq:Fortin_interp} holds. This means that for all $\mu_h \in Q_h$ there exists $\boldsymbol{v}_h \in \vec{V}^h $ such that $(\nabla \cdot \boldsymbol{v}_h, q_h)_\Omega = (\mu_h, q_h)_\Omega$ and $\|\boldsymbol{v}_h\|_V \lesssim \|\mu_h\|_\Omega$. Hence \eqref{eq:disc_infsup} is also satisfied.

Since $a(\cdot,\cdot)$ is a linear operator in this case we see that \eqref{eq:aprop1}-\eqref{eq:aprop3} are satisfied using standard arguments.
Hence the assumptions of section \ref{sec:AGLS} are satisfied in this case and hence
we conclude that the best approximation estimate
\eqref{eq:best_approx1} holds.

\subsection{Weak imposition of Dirichlet boundary conditions}

\subsubsection{Model problem}
Let us first consider the Poisson model problem: find $u:\Omega \rightarrow \IR$ such 
that
\begin{equation}\label{poiss}
-\Delta u= f ~\text{in} ~\Omega ,\quad u=g ~ \text{on} ~\Gamma:=\partial\Omega
\end{equation}
where $\Omega$ is a bounded domain in two or three space dimensions, with outward 
pointing normal $\boldsymbol{n}$, and $f$ and $g$ are given functions. For simplicity, 
we shall assume that $\Omega$ is polyhedral (polygonal).  
A classical way of prescribing $u=g$ on the boundary is to pose the problem (\ref{poiss}) as a minimisation problem with side conditions and seek stationary points to the functional
\begin{equation}\label{eq:first}
\mathcal{L}(v,\mu) := \frac12 a(v,v)  -
\lga{\mu}{v-g}  - \lom{f}{v} 
\end{equation}
where 
\begin{equation}\label{eq:forms}
\lom{f}{v} := \int_{\Omega}f v \, d\Omega, 
\quad  a(u,v) := \int_\Omega \nabla u\cdot\nabla v\, d\Omega
\end{equation}
and $\lga{\mu}{v-g}$ is interpreted as a duality pairing on $H^{-1/2}(\Gamma) \times H^{1/2}(\Gamma)$. We are thus in case B of Sec. \ref{sec:AGLS}, and the method proposed will only make sense on discrete spaces.

The stationary points to (\ref{eq:first}) are given by finding $(u,\lambda)\in H^1(\Omega)\times H^{-1/2}(\Gamma)$ such that
\begin{equation}\label{eq:weakform}
a(u,v) - \lga{\lambda}{v} = (f,v)\quad \forall v\in H^1(\Omega)
\end{equation}
\begin{equation}\label{eq:weakform2}
\lga{\mu}{u} = \lga{\mu}{g}\quad \forall \mu\in H^{-1/2}(\Gamma) 
\end{equation}
As mentioned above, the discretisation of this problem requires
balancing of the discrete spaces for the multiplier $\lambda$ and the primal solution $u$ in order for the method to be stable. 

\subsubsection{The augmented Lagrangian method for boundary conditions\label{Nitstab}}
The Lagrangian in (\ref{eq:first}) is augmented by a penalty term scaled by a parameter $\gamma\in \mathbb{R}^+$ so that we seek stationary points to
\begin{equation}\label{eq:aug1}
\mathcal{L}_A(v,\mu) := \frac12 a(v,v)  -
\lga{\mu}{v-g}  + \frac{\gamma}{2}\| (v-g)\|^2_{H^{1/2}(\Gamma)} - \lom{f}{v} 
\end{equation}
\textcolor{black}{We note that the continuous norms imply $r=1/2$ in the discrete norms.
To find the stationary points we seek $(u,\lambda)$ such that }
\begin{align*}
 a(u,v )  - \lga{\lambda}{v}  
+ \gamma(u-g, v)_{H^{1/2}(\Gamma)} 
+ \lga{u}{\mu} 
= {}&
\lom{f}{v}  \\ &
+ \gamma\langle g, v\rangle_{H^{1/2}(\Gamma)}\\ &
+ \lga{g}{\mu} 
\end{align*}
To determine the Lagrange multiplier $\lambda$ we set $\mu = 0$, and 
integrate by parts which gives
\begin{align}
(-\Delta v + f,  v )_\Omega + \langle\nabla_n u - \lambda, v\rangle_{H^{1/2}(\Gamma),H^{-1/2}(\Gamma)}  = 0 
\end{align}
For the exact solution the first term vanish and we conclude that $\lambda = \nabla_n v$.

We now wish to find a stable discrete counterpart to this optimisation problem.  To this end, let $\mathcal{T}_h$
be a family of quasi--uniform partitions, with 
mesh parameter $h$, of $\Omega$ into shape 
regular triangles or tetrahedra $T$ and the discrete space
\begin{equation}
V_h := \{v_h \in H^1(\Omega): v_h\vert_T \in \mathbb{P}_k(T), \,
\forall T \in {\mathcal{T}_h} \},\quad \mbox{ for } k \ge 1 
\end{equation}
and some discrete space $Q_h$ (not explicitly defined) for the approximation of the 
Lagrange multiplier.

We first follow the idea of (\ref{eq:discrete1}) and replace the $H^{1/2}$--norm by the discrete counterpart \textcolor{black}{$h^{-1/2} \| \cdot \|_{L_2(\Gamma)}$}, which by an inverse estimate dominates the $H^{1/2}(\Gamma)$ norm,
\begin{equation}\label{eq:inv}
\|v\|^2_{H^{1/2}(\Gamma)}\lesssim h^{-1} \|v\|^2_{L_2(\Gamma)} \quad \text{$v\in V_h$}
\end{equation}
and introduce the problem of finding the stationary point in $V_h\times Q_h$ of the discrete Lagrangian
\begin{equation}\label{eq:aug2a}
\mathcal{L}_A^h(v,\mu) := \frac12 a(v,v)  -
({\mu},{v-g})_\Gamma  +\frac{\gamma_0}{2h}\| v-g\|^2_{L_2(\Gamma)} - \lom{f}{v} 
\end{equation}

Recalling next that formally the Lagrange multiplier in (\ref{eq:weakform}) is given by $\mu = \nabla_n v$, which provides a direct way of computing the Lagrange multiplier from the primal solution, we obtain
\begin{equation}\label{eq:aug2}
\mathcal{L}_A^h(v) := \frac12 a(v,v)  -
(\nabla_n{v},{v-g})_{\Gamma}  +  \frac{\gamma_0}{2h}\| v-g \|^2_{L_2(\Gamma)}  - \lom{f}{v} 
\end{equation}
This is our stabilised ALM, the minimiser to which solves the problem of finding $u_h\in V_h$ such that
\textcolor{black}{
\begin{equation}\label{Nit1}
a(u_h,v)-({\nabla_n u_h},{v})_{\Gamma}-({\nabla_n v},{u_h})_{\Gamma}+\gamma_0 h^{-1} ({u_h},{v})_\Gamma = l(v)\quad\forall v\in V_h
\end{equation}
}
where
\begin{equation}
l(v) :=(f,v)+(\gamma_0 h^{-1}{v} - {\nabla_n v},{g})_\Gamma
\end{equation}
We identify the classical method of Nitsche \cite{Nit70}, stable if $\gamma_0$ is chosen so that $\gamma _0> \gamma_C$, where $\gamma_C$ is the 
constant in the inverse inequality
\begin{equation}\label{eq:inverse}
h \|\nabla_n v\|_{L_2(\Gamma)}^2 \leq \gamma_C \|\nabla v\|_{L_2(\Omega)}^2
\end{equation}
\begin{remark}
As shown by Stenberg \cite{St95} (and discussed in Sec. \ref{sec:eliminate}), Nitsche's method can be viewed as a particular instance of the GLS stabilisation method of Barbosa--Hughes \cite{BaHu91}; in this sense the ALM is a variant of GLS, with the multiplier eliminated.
\end{remark}
\textcolor{black}{\begin{remark}
We note that the ALM leads to the symmetric form of Nitsche's method. The corresponding unsymmetric forms, as discussed, e.g., in \cite{ChHiRe15}, are derived using different arguments.
\end{remark}}

\subsection{Inequality boundary conditions}

An important feature of the augmented Lagrangian approach is that it can be extended 
to the case of inequality constraints, as first shown by Chouly and Hild in the context of elastic contact \cite{ChHi13}. We consider the problem: find $u:\Omega \rightarrow \IR$ such that 
\begin{equation}\label{poiss_ineq}
-\Delta u= f ~\text{in} ~\Omega ,\quad u-g\leq 0 ~ \text{on} ~\Gamma
\end{equation}
We have the following Kuhn--Tucker conditions on the multiplier and side condition:
\begin{equation}\label{eq:Kuhn1}
u -g \leq 0,\quad \lambda \leq 0, \quad \lambda (u-g)=0.
\end{equation}
We now use the analogue to (\ref{eq:lagsubst}), that
(\ref{eq:Kuhn1}) is equivalent to 
\begin{equation}\label{eq:lambda1}
\lambda = -{\gamma}\,[u-g-\gamma^{-1}\, \lambda]_+ 
\end{equation}
first used in this context by Alart and Curnier \cite{AC91}.
Now we can take another route to the augmented Lagrangian method. Taking the discrete counterpart to the standard multiplier equilibrium equation (\ref{eq:weakform}) we find
\begin{equation}\label{eq:weakformA}
(f,v) = a(u_h,v)-({\lambda_h},{v})_\Gamma = a(u_h,v)-({\lambda_h},{v-\gamma^{-1}\mu})_\Gamma-({\gamma^{-1}\lambda_h},{\mu})_\Gamma
\end{equation}
for all $v\in V_h$ and $\mu\in Q_h$ arbitrary.
Using now (\ref{eq:lambda1}) we find
\begin{equation}\label{eq:weakformB}
(f,v) = a(u_h,v)+({{\gamma}\,[u_h-g-\gamma^{-1}\, \lambda_h]_+},{v-\gamma^{-1}\mu})_\Gamma-({\gamma^{-1}\lambda_h},{\mu})_\Gamma\quad \forall (v,\mu)\in V_h\times Q_h.
\end{equation}
This is the optimality system for the Lagrangian
\begin{equation}\label{eq:aug_ineq_multi}
\mathcal{L}_A^h(v,\mu) := \frac12 a(v,v)  +\frac12 \Vert \gamma^{1/2}[v-g-\gamma^{-1} \mu]_+\Vert_{L_2(\Gamma)}^2 -\|\gamma^{-1/2}\mu\|^2_{L_2(\Gamma)} - \lom{f}{v} 
\end{equation}
\textcolor{black}{cf. \cite{AC91}.}
Approximating $\lambda_h\approx \partial_n u_h$ and setting $\mu=\partial_n v$, we seek $u_h\in V_h$ such that
 \begin{equation}\label{eq:weakformC}
a(u_h,v)+({{\gamma}\,[u_h-g-\gamma^{-1}\, \partial_nu_h]_+},{v-\gamma^{-1}\partial_n v})_\Gamma-({\gamma^{-1}\partial_n u_h},{\partial_n v})_\Gamma=(f,v)_\Omega \quad \forall \in V_h
\end{equation}
The solution to this problem is the minimiser of the nonlinear augmented Lagrangian
\begin{equation}\label{eq:augnonlin}
\mathcal{L}_A^h(v) := \frac12 a(v,v)  +
\frac12 \Vert \gamma^{1/2}[v_h-g-\gamma^{-1} \partial_nv]_+\Vert_{L_2(\Gamma)}^2 -\|\gamma^{-1/2}\partial_n v\|^2_{L_2(\Gamma)} - \lom{f}{v_h} 
\end{equation}

Again, we choose $\gamma = \gamma_0/h$.
Variants and several extensions of (\ref{eq:weakformC}) can be found in \cite{BuHa17}. We remark here that (\ref{eq:weakformC}) coincides with (\ref{Nit1}) in case of contact and gives a penalty on $\partial_n u = 0$  on $\Gamma$ in case of no contact. This penalty does not destroy the coercivity of the problem if (\ref{eq:inv}) is satisfied.

\begin{remark}
In the GLS stabilisation for variational inequalities proposed by Barbosa and Hughes \cite{BH92}, no penalty is added to the Lagrangian;
the multiplier is not eliminated, and their approach is a stabilised Lagrange multiplier method which requires the solution of 
an inequality problem. It is also possible to retain the multiplier in the ALM and add GLS stabilisation to the augmented Lagrangian. 
This approach, which also leads to a nonlinear equality problem, was explored in \cite{HaRaSa16}.
\end{remark}
\subsubsection{Satisfaction of assumptions for the abstract analysis}
In this case $V=H^1(\Omega)$ and $H=H^{\frac12}(\partial \Omega)$, $H' = H^{-\frac12}(\partial \Omega)$. However since the solution to \eqref{poiss_ineq} is known to have the additional regularity $u \in H^{\frac32+\epsilon}(\Omega)$, $\epsilon>0$ it follows that $\partial_n u \in L^2(\Omega)$ and the discrete norms $H_h$ and $H_h'$ defined by \eqref{eq:discrete1} and \eqref{eq:discrete2} are well defined on the exact solution.
While \eqref{eq:inverse} then is enough to make the formulation \eqref{eq:weakformC} satisfy the assumptions necessary for the analysis of section \ref{sec:eliminate}, the formulation \eqref{eq:aug_ineq_multi} still requires the satisfaction of \eqref{eq:Fortin_interp} and \eqref{eq:disc_infsup}. For a charaterisation of spaces satisfying these conditions (in the $h$-weighted $L^2$-norm) we refer to \cite{Pit80}. An example of a construction is two space dimension is to take element wise constant approximation for $Q_h$ and let $V_h$ consist of piecewise quadratic continuous approximation, or piecewise affine approximation enriched with a quadratic bubble added to elements adjacent to the boundary on each boundary face. The Fortin interpolant can then be constructed by first defining the nodal degrees of freedom using any $H^1$-stable interpolant and then fixing the degree of freedom associated to the bubble on each boundary faces so that \eqref{eq:Fortin_interp} and \eqref{eq:disc_infsup} are satisfied. Indeed here they are equivalent. The same construction may be used for the forthcoming sections.  
\subsection{A model for elastic contact\label{contstab}}
\subsubsection{Treatment of Robin boundary conditions}
To show the versatility of the ALM 
we shall consider the equations of linear elasticity in contact with a springy substrate. We start with the linear case of a Robin boundary condition: Find the displacement $\bfu = \left[
u_i\right]_{i=1}^n$ and the symmetric stress tensor $\bfsig =
\left[\sigma_{ij}\right]_{i,j=1}^n$ such that
\begin{align}\label{diffelastis}
  \bfsig = {} & \frac{\nu E}{(1+\nu)(1-2\nu)} ~\text{tr}\,\bfeps(\bfu)\,\bfI 
   + \frac{E}{(1+\nu)}\bfeps(\bfu)\quad \text{in}\quad\Omega ,\\ 
-\nabla\cdot\bfsig = {}& \bff \quad \text{in}\quad\Omega, \\
  \bfS\bfu ={}& -\bfsig\cdot\bfn\quad \text{on}\quad\partial\Omega_{\text{S}}, \\
\bfsig\cdot\bfn = {}& {\bf 0}  \quad \text{on}\quad\partial\Omega\setminus\partial\Omega_{\text{S}}.\label{diffelastie}
\end{align}
Here $\Omega$ is a closed subset of $\IR^n$, $n=2$ or $n=3$, 
$E$ is Young's modulus and $\nu$ is Poisson's ratio.
$\bfeps\left(\bfu\right) = \left[\varepsilon_{ij}(\bfu)\right]_{i,j=1}^n$
is the strain tensor with components
\[ \varepsilon_{ij}(\bfu ) = \frac{1}{2}\left( \frac{\partial
  u_i}{\partial x_j}+\frac{\partial u_j}{\partial x_i}\right),
  \]
and trace
  \[
\text{tr}\,\bfeps(\bfu) = \sum_{i} \varepsilon_{ii}(\bfu) = \nabla\cdot\bfu .
\] 
Furthermore,
$\nabla\cdot\bfsig = \left[\sum_{j=1}^n\partial
  \sigma_{ij}/\partial x_j\right]_{i=1}^n$,
$\bfI = \left[\delta_{ij}\right]_{i,j=1}^n$ with $\delta_{ij} =1$
if $i=j$ and $\delta_{ij}= 0$ if $i\neq j$, and $\bff$ is a
given load. Finally, we assume that the boundary stiffness $\bfS$ is of the form
\[
\bfS = \alpha^{-1} \bfn\otimes\bfn + \beta^{-1} \bfP, \quad \bfP:=(\bfI-\bfn\otimes\bfn)
\]
where $\alpha$ and $\beta$ are flexibility parameters in the normal and tangential direction, respectively.
The solution to (\ref{diffelastis})--(\ref{diffelastie}) minimises the functional
\begin{equation}\label{eq:stifform}
\mathcal{L}_S(\bfu) := \frac{1}{2}a(\bfu,\bfu) -(\bff,\bfu)_{\Omega} + \langle\bfS \bfu,\bfu\rangle_{\partial\Omega_\text{S}}
\end{equation}
where
\[
a(\bfu,\bfv) := ( \bfsig(\bfu),\bfeps(\bfv))_{\Omega}=\int_{\Omega} \bfsig(\bfu):\bfeps(\bfv)\, d\Omega
\]
which is the usual foundation for a discrete method. However, to obtain a robust method for the case of $\alpha\rightarrow 0$ or $\beta\rightarrow 0$, we can introduce a new variable $\bflam\in[ L_2(\partial\Omega_\text{S})]^n$ and seek stationary points to
\begin{equation}\label{eq:flex}
\mathcal{L}(\bfu,\bflam) := \frac{1}{2}a(\bfu,\bfu) -(\bff,\bfu)_{\Omega} -\frac{1}{2} \langle\bfK\bflam  ,\bflam\rangle_{\partial\Omega_\text{S}}- \langle\bflam  ,\bfu\rangle_{\partial\Omega_\text{S}}
\end{equation}
where $\bfK := \bfS^{-1}$ is a flexibility matrix which simply tends to the zero matrix if $\alpha,\beta\rightarrow 0$, and the Robin condition becomes a Dirichlet condition.
The stationary point to (\ref{eq:flex}) fulfils the variational equations of finding $(\bfu,\bflam)\in [H^1(\Omega)]^n\times [L_2(\partial\Omega_\text{S})]^n$ such that
\begin{align}
a(\bfu,\bfv) - \langle\bflam  ,\bfv\rangle_{\partial\Omega_\text{S}} = {}& (\bff,\bfv)_{\Omega}\quad\forall \bfv\in [H^1(\Omega)]^n\\
 \langle\bfK\bflam  +\bfu,\bfmu\rangle_{\partial\Omega_\text{S}}= {}& { 0}\quad\forall \bfmu\in [L_2(\partial\Omega_\text{S})]^n
\end{align}
and we note that, formally, 
\begin{equation}\label{eq:bflambda}
\bflam = \bfsig(\bfu)\cdot\bfn
\end{equation}
In the discrete case, we can now formulate an ALM by adding a penalty term and replacing $\bflam$ using (\ref{eq:bflambda}), looking for the minimiser of
\begin{align}\nonumber
\mathcal{L}_A^h(\bfu) :={}& \frac{1}{2}a(\bfu,\bfu) -(\bff,\bfu)_{\Omega} - \langle\bfsig(\bfu)\cdot\bfn  ,\bfu\rangle_{\partial\Omega_\text{S}}\\
 & -\frac12 \langle\bfK\bfsig(\bfu)\cdot\bfn  ,\bfsig(\bfu)\cdot\bfn\rangle_{\partial\Omega_\text{S}}+\frac12\langle\bfS_h (\bfK\bfsig(\bfu)\cdot\bfn+\bfu),\bfK\bfsig(\bfu)\cdot\bfn+\bfu\rangle_{\partial\Omega_\text{S}}
\label{eq:flexALM}\end{align}
where $\bfS_h$ is a discrete stiffness matrix, to be chosen. The minimiser to (\ref{eq:flexALM}) satisfies the variational equation of finding $\bfu_h\in [V_h]^n$ such that
\begin{equation}\label{eq:varsh}
a_{\bfS_h}(\bfu_h,\bfv ) = (\bff,\bfv)_{\Omega}\quad\forall \bfv\in V := [V_h]^n
\end{equation}
where
\begin{align}\label{eq:varshb}
a_{\bfS_h}(\bfu,\bfv ) := {}& a(\bfu,\bfv) - \langle\bfu +\bfK\bfsig(\bfu)\cdot\bfn,\bfsig(\bfv)\cdot\bfn\rangle_{\partial\Omega_\text{S}}- \langle\bfsig(\bfu)\cdot\bfn,\bfv +\bfK\bfsig(\bfv)\cdot\bfn\rangle_{\partial\Omega_\text{S}}\\
& + \langle \bfK\bfsig(\bfu)\cdot\bfn,\bfsig(\bfv)\cdot\bfn\rangle_{\partial\Omega_\text{S}}+ \langle  \bfS_h(\bfu+\bfK\bfsig(\bfu)\cdot\bfn),\bfv+\bfK\bfsig(\bfv)\cdot\bfn\rangle_{\partial\Omega_\text{S}}
\end{align}
which is related to the Nitsche method for interfaces in \cite{HaHa04,Ha05}, and a variant of the method of Juntunen and Stenberg \cite{JuSt09} for Poisson's problem with Robin boundary conditions.
 With the particular choice 
\begin{equation}
\bfS_h = \left((h/\gamma_0)\bfI + \bfK\right)^{-1}
\end{equation}
we regain the standard Nitsche method for the Dirichet problem if $\bfK$ is the zero matrix, and if $\bfK$ is nonzero we approach the minimiser of (\ref{eq:stifform}) as $h\rightarrow 0$.
Thus the method is robust also in the limit of zero flexibility.

\subsubsection{One--sided conditions in contact}\label{sec:one-sided}
We now wish to activate the Robin boundary only if $\bfu\cdot\bfn -g > 0$, corresponding to contact with a springy foundation at a distance $g$ from the elastic body. Since this condition is only on the normal part of the displacement, we consider the case of slip, i.e., we choose
\[
\bfK = \alpha \bfn\otimes\bfn
\]
Setting $\sigma_n := \bfn\cdot\bfsig\cdot\bfn$ and $u_n =\bfu\cdot\bfn$, the linear case is then to find stationary points to (\ref{eq:flex}) simplified as
\begin{equation}\label{eq:simpmod}
\mathcal{L}(\bfu,\lambda_n) := \frac{1}{2}a(\bfu,\bfu) -(\bff,\bfu)_{\Omega} -\frac{1}{2} \langle\alpha\lambda_n   ,\lambda_n \rangle_{\partial\Omega_\text{S}}- \langle\lambda_n  ,u_n-g\rangle_{\partial\Omega_\text{S}}
\end{equation}
where formally $\lambda_n = \sigma_n(\bfu)$. In the case of contact we now have the KKT condition
\begin{align}
 u_n -g+\alpha\lambda_n \leq {}& 0\\
\lambda_n \leq {}& 0\\
\lambda_n \left( u_n -g+\alpha\lambda_n\right) = {}& 0
\end{align} 
which we can formally rewrite as
\begin{equation}\label{eq:KKTcontact}
\lambda_n =-\gamma [  ( u_n -g+\alpha\lambda_n)-\gamma^{-1}\lambda_n ]_+
\end{equation}
Proceeding as in (\ref{eq:weakformA}), the equilibrium equation resulting from (\ref{eq:simpmod}) is 
\begin{equation}
(\bff,\bfv)_{\Omega} = a(\bfu,\bfv) -\langle\lambda_n  ,v_n\rangle_{\partial\Omega_\text{S}}
\end{equation}
and seeing as 
\begin{align}\nonumber
- \langle\lambda_n  ,v_n\rangle_{\partial\Omega_\text{S}} = {}& - \langle\lambda_n  ,v_n+\alpha\mu_n\rangle_{\partial\Omega_\text{S}} +\langle\alpha \lambda_n ,\mu_n\rangle_{\partial\Omega_\text{S}} \\
 = {}& - \langle\lambda_n  ,v_n+(\alpha-\gamma^{-1})\mu_n\rangle_{\partial\Omega_\text{S}} +\langle(\alpha-\gamma^{-1}) \lambda_n, \mu_n\rangle_{\partial\Omega_\text{S}} 
  \end{align}
  with $\mu_n$ arbitrary, we find that the discrete augmented Lagrangian can be written
\begin{align}\nonumber
\mathcal{L}_A^h(\bfu,\lambda_n) :={}& \frac{1}{2}a(\bfu,\bfu) -(\bff,\bfu)_{\Omega} +\frac12\| \gamma^{1/2}[(u_n-g + (\alpha-\gamma^{-1}) \lambda_n )]_+\|^2_{\partial\Omega_\text{S}}
\\
 & +\frac12 \langle(\alpha-\gamma^{-1})\lambda_n,\lambda_n\rangle_{\partial\Omega_\text{S}}
 \end{align}
%
and  with $\lambda_n \approx \sigma_n(\bfu)$,
\begin{align}\nonumber
\mathcal{L}_A^h(\bfu) :={}& \frac{1}{2}a(\bfu,\bfu) -(\bff,\bfu)_{\Omega} +
\frac12\| \gamma^{1/2}[u_n-g + (\alpha -\gamma^{-1})\sigma_n(\bfu)]_+ \|^2_{\partial\Omega_\text{S}}\\
 & +\frac12 \langle(\alpha-\gamma^{-1})\sigma_n(\bfu)  ,\sigma_n(\bfu)\rangle_{\partial\Omega_\text{S}}
\label{eq:flexALM3}\end{align}
the minimiser of which is $\bfu\in V$ satisfying
\begin{align}\nonumber
(\bff,\bfv)_{\Omega} ={}& a(\bfu,\bfv) + \langle\gamma [u_n-g + (\alpha -\gamma^{-1}) \sigma_n(\bfu)]_+,v_n + (\alpha -\gamma^{-1}) \sigma_n(\bfv)\rangle \\
 & +\langle(\alpha-\gamma^{-1})\sigma_n(\bfu)  ,\sigma_n(\bfv)\rangle_{\partial\Omega_\text{S}} \quad\forall \bfv\in V
\label{eq:flexsolve}\end{align}
which coincides with (\ref{eq:varsh}) in contact, and gives an additional penalty on the condition $\sigma_n(\bfu) =0$ if there is no contact. 
Choosing now 
\begin{equation}\label{eq:alphapen}
\gamma = (h/\gamma_0+\alpha)^{-1}  \; \Rightarrow \; \alpha-\gamma^{-1} = -\frac{h}{\gamma_0}
\end{equation}
we obtain
the same penalty on the normal stress as in \cite{ChHi13}, which does not destroy the positive definite nature of the problem if we take
$\gamma_0 > \gamma_C$ where $\gamma_C$ is the (stiffness dependent) constant in the inverse inequality
\begin{equation}\label{eq:inverse2}
\| h^{1/2}\bfsig(\bfv)\cdot\bfn\|_{\partial\Omega_\text{S}}^2 \leq \gamma_C a(\bfv,\bfv)\, \quad\forall \bfv \in V
\end{equation}

\subsection{Stabilising the Kirchhoff plate model}

\subsubsection{Approximation with independent rotations and displacement}

In the Kirchhoff plate model, posed on a domain 
$\Omega\subset \IR^2$ with boundary $\partial\Omega$, we seek an out--of--plane 
(scalar) displacement $u$ to which we associate the strain (curvature) tensor
\begin{equation}
\bfeps(\nabla u) := \frac12\left(\nabla\otimes (\nabla u) + (\nabla u ) \otimes \nabla \right) 
= \nabla \otimes \nabla u = \nabla^2 u   
\end{equation}
and the plate stress (moment) tensor
\begin{align}\label{eq:plate-stress-tensor}
\bfsig_P (\nabla u) &:= \mcCP \left(\bfeps(\nabla u) + \nu (1- {\nu })^{-1}
\nabla\cdot\nabla u \, \bfI \right)
\\
&= \mcCP \left( \nabla^2 u + \nu (1-\nu)^{-1} \Delta u \bfI \right)
\end{align}
where 
\begin{equation}\label{eq:mcCP}
\mcCP =  \frac{E t^3}{12(1+\nu)} 
\end{equation}
where $t$ denotes the 
plate thickness. 

The Kirch\-hoff clamped problem then takes the form: given the out--of--plane 
(scaled) load $t^3f$, find the displacement $u$ such that
\begin{align}
\nabla\cdot\left( \nabla\cdot \bfsig_P ( \nabla u )\right)= t^3f &  \qquad \text{in $\Omega$}
\\
u = 0  & \qquad \text{on $\partial \Omega$}
\\
\bfn \cdot \nabla u = 0   &\qquad \text{on $\partial \Omega$}
\end{align}
The corresponding variational problem takes the form: 
Find the displacement $u \in H^2_0(\Omega)$ such that
\begin{equation}\label{eq:varform}
a_P(\nabla u, \nabla v )= (f,v)_\Omega \qquad \forall v \in H^2_0(\Omega
\end{equation}
where 
\begin{align}
a_P(\nabla v, \nabla w) & := (t^{-3}\bfsig_P(\nabla v), 
\bfeps(\nabla w))_\Omega 
\end{align}

From a computational point of view (\ref{eq:varform}) is cumbersome since it requires $C^1$--conforming elements or
carefully constructed nonconforming approximations. It is therefore common to use instead the Mindlin--Reissner model
which
is described by the following partial differential equations:
\begin{equation}
\label{PDE}
\begin{split}
  -t^{-3}\nabla\cdot\bfsig_P (\bftheta)-\kappa\,t^{-2}\left(\nabla u-\bftheta\right) &= 0, \quad \text{in } \Omega\subset\IR^2, \\
  -\kappa\,t^{-2}\,\nabla\cdot\left(\nabla u-\bftheta\right) &= f, \quad \text{in } \Omega,
\end{split}
\end{equation}
where
$\bftheta$ is the
rotation of the median surface and
$\kappa$ is a shear correction factor.
We note that this relaxes the continuity requirement on $u$ and that, as $t\rightarrow 0$, tends to the Kirchhoff model.
However, the requirement on the approximation to allow $\vert\nabla u -\bftheta\vert \rightarrow 0$ is difficult to realise
in the discrete setting and if this condition cannot be met, shear locking occurs, destroying the approximation properties of the discrete model.
The ALM can offer an alternative approach in which we enforce the requirement $\nabla u = \bftheta$ by a Lagrange multiplier.
To this end we consider the Lagrangian
\begin{equation}\label{eq:MRfunc}
 \mathcal{L}(u,\bftheta,\bfp) := \frac12 a_P(\bftheta ,\bftheta)+(\bfp,\nabla u-\bftheta)_{\Omega}-(f,u)_{\Omega}
\end{equation}
The Euler stationary points of (\ref{eq:MRfunc}) satisfy the weak system
\begin{align}
a_P(\bftheta,\bftwei) +(\bfp,\nabla v-\bftwei)_{\Omega} = {}& (f, v)_{\Omega} \quad\forall (v,\bftwei)\in  H_0^1(\Omega)\times [H_0^1(\Omega)]^2,\\
(\nabla u-\bftheta,\bfq)_{\Omega} = {}& 0\quad \forall \bfq\in [L_2(\Omega)]^2,
 \end{align}
 corresponding to the strong form
 \begin{align}
  -\nabla\cdot\bfsig_P (\bftheta) &= t^3\bfp \quad \text{in } \Omega\label{eq:lamsig} \\
  -\nabla\cdot\bfp &= f \quad \text{in } \Omega\\
  \nabla u-\bftheta &= 0 \quad \text{in } \Omega
\end{align}

We now wish to stabilise (\ref{eq:MRfunc}) using the ALM. To this end, we
use (\ref{eq:lamsig}) to eliminate $\bflam$ and add a penalty term on the side condition to obtain the augmented discrete functional
\begin{equation}
 \mathcal{L}_A^h(u_h,\bftheta_h):= \frac12 a_P(\bftheta_h,\bftheta_h) -(t^{-3}\nabla\cdot\bfsig_P (\bftheta_h),\nabla u_h-\bftheta_h)_{h} + \frac{\gamma}{2}\| \nabla u_h -\bftheta_h\|^2_{\Omega}-(f,u_h)_{\Omega}
\end{equation}
where $u_h\in V^h_1$ and $\bftheta_h\in [V^h_2]^2$ for some discrete spaces $V^h_1$ and $V^h_2$. Here we use the notation
\begin{equation}
(\bfu,\bfv)_h := \sum_{T\in\mathcal{T}_h} \int_{T} \bfu\cdot\bfv\, dxdy
\end{equation}

The Euler equations corresponding to the augmented system are
\begin{equation} \label{pdef0}
A_h((\bftheta_h,u_h),(\bftwei,v))=  (f, v)\
 \end{equation}
for all $(v,\bftwei)\in V^h_1\times [V^h_2]^2$, where
\begin{equation}
A_h((\bftheta,u),(\bftwei,v)) := a_P(\bftheta,\bftwei) -(t^{-3}\nabla\cdot\bfsig_P (\bftheta),\nabla v-\bftwei)_{h}-(\nabla u-\bftheta,t^{-3}\nabla\cdot\bfsig_P (\bftwei))_{h}+\gamma (\nabla u -\bftheta,\nabla v-\bftwei)
\end{equation}
 Now, if $V_2^h$ is the space of piecewise linears, the terms $(\cdot,\cdot)_h$ vanish
and, seeing as $\theta\in H^{1}(\Omega)$ and thus $\lambda\in H^{-1}(\Omega)$, we choose \textcolor{black}{$r=1$} in (\ref{eq:discrete2}) and $\gamma = \gamma_0/h^2$ to obtain a scheme proposed by Pitk\"{a}ranta \cite{Pi88}; for higher order polynomial approximations we recover
a GLS stabilisation method due to Stenberg \cite{St95b,BeNiSt07}.

\subsubsection{The plate obstacle problem}

We next consider applying the model from the previous Section to a regularised plate obstacle problem.  The continuous model is
 \begin{align}
  -\nabla\cdot\bfsig_P (\bftheta) &= t^3\bfp \quad \text{in } \Omega\label{eq:obst} \\
  -\nabla\cdot\bfp + p &= f \quad \text{in } \Omega\\
  \nabla u-\bftheta &= 0 \quad \text{in } \Omega\\
  p\geq 0,\;  u-g + \beta p\geq 0, \; p(u-g+\beta p) &= 0 \quad \text{in } \Omega
  \end{align}
 Here, $\beta$ is a given compliance which regularises the problem, in the limit case of $\beta=0$
 (rigid obstacle) we instead have the KKT conditions $ p\geq 0$, $u-g\geq 0$, and  $p(u-g)= 0$.  Note  that the regularity in the limit case is insufficient for the analysis above. 
 Indeed it is well known that $u \not \in H^4(\Omega)$, which is insufficient for the multiplier to be in $L_2$.
 It is however known that for $\beta>0$, $u \in H^4(\Omega)$ if the interior angles of the domain are smaller than $126^\circ$ (see \cite{BR80}). Therefore the analysis is valid for all $\beta >0$,
 since we have 
 \begin{equation}
p\in Q = \left\{\begin{array}{>{\displaystyle}l}
 L_2(\Omega)\quad\text{if $\beta > 0$} \\
 H^{-2}(\Omega)\quad\text{if $\beta = 0$}\end{array}\right.
 \end{equation} 

We see that, again, formally $\bflam = -t^{-3}\nabla\cdot\bfsig_P (\bftheta)$ and that $p = f -t^{-3}\nabla\cdot\left( \nabla\cdot \bfsig_P (\bftheta )\right)$. Following the
strategy from Sec. \ref{sec:one-sided} we write
\begin{equation}
p =\epsilon  [  ( u_n -g+\beta p)-\epsilon^{-1}p ]_+
\end{equation}
We need to also stabilise the rotations, and
to this end we consider the discrete Lagrangian
\begin{align}\nonumber
\mathcal{L}_A^h(\bftwei,v) := &{} \frac12 A_h((\bftwei,v),(\bftwei,v))   +
\frac12 \Vert \epsilon^{1/2}[u-g-(\epsilon^{-1}-\beta) (f -t^{-3}\nabla\cdot\left( \nabla\cdot \bfsig_P (\bftwei) )\right)]_+\Vert_h^2 \\
& -\frac12\|(\epsilon^{-1}-\beta)^{1/2} (f -t^{-3}\nabla\cdot ( \nabla\cdot \bfsig_P (\bftwei) ) )\|_h^2 - \lom{f}{v} \label{eq:augnonlinplate}
\end{align}
where, considering the limit case $p\in H^{-2}(\Omega)$, we choose $r=2$ and thus 
\begin{equation}
\epsilon = (h^4/\gamma_1+\beta)^{-1}
\end{equation}
 with $\gamma_1$ a sufficiently large constant. A similar approach has been suggested by Gustafsson et al. \cite{GuStVi17,GSV19} in the context of $C^1$ approximations of the clamped Kirchhoff plate with GLS stabilisation, without specific reference to augmented Lagrangian methods.

\section{Numerical examples}\label{sec:numerics}
\subsection{Cavitation}
\textcolor{black}{The problem formulation is that of (\ref{aug1h})--(\ref{aug2h}). Our numerical experience is that for the chosen discretization $\gamma_0$ should not be chosen too large; in our example we chose $\gamma_0=1/100$.}

We consider a domain with an elliptically shaped pocket, with mesh shown in Fig. \ref{fig:meshcav}. The boundary conditions are natural boundary conditions $(-p\boldsymbol I + \mu\nabla \boldsymbol u)\cdot\boldsymbol n = \boldsymbol 0$ at the left- and right-hand sides. The velocity is set to zero along the floor of the channel and pocket boundary, and the flow is driven by setting $\boldsymbol u = (1,0)$ at the ceiling. The viscosity is $\mu=1$.
We compare the pressure solution with and without cavitation in Figs. \ref{fig:caviso}--\ref{fig:cavelev} and
note that there is a pressure resultant in the cavitation case, creating a lifting resultant force, cf. \cite{NiHa11}. 

\subsection{Elastic contact with flexible plane}

In this example, we consider an elastic sphere of radius 1 under the load $\bff=(0,0,-50)$ in contact with a flexible plane.
\textcolor{black}{The contact is assumed friction--free, in accordance with the form (\ref{eq:flexsolve}).}
The moduli of elasticity were chosen as $E=200$ and $\nu=0.33$ and the stabilisation parameter was taken as $\gamma=100 E$.
In Figs. \ref{fig:alpha0}--\ref{fig:alphaminus2} we show the deformation
and contact pressure for increasing flexibilities of the contact plane.

\subsection{Plate obstacle problem}
The considered example, from \cite{GSV19}, concerns a clamped square plate $\Omega = (0,1)\times(0,1)$ in contact with a rigid obstacle ($\beta=0)$ in the center of the plate,
$g=100((x-1/2)^2+(y-1/2)^2)$. Here $E=1$, $\nu=0$, $t=1$, and we chose $\gamma_1=10 E$ and $\gamma_2 =E/10$. We present a sample computation using continuous, piecewise $P^2$ approximations for both displacement and rotations on triangular meshes, \textcolor{black}{based on the variational equations resulting from minimization of the Lagrangian (\ref{eq:augnonlinplate}).}
The mesh is shown in Fig. \ref{fig:platemesh} (left), and the corresponding soultion is given
in Figs. \ref{fig:platemesh} (right, with obstacle indicated) and \ref{fig:plateiso}. The computational solution agrees well with that of \cite{GSV19}.

\newpage
\begin{figure}[hbt]
\begin{center}
\includegraphics[width=3in]{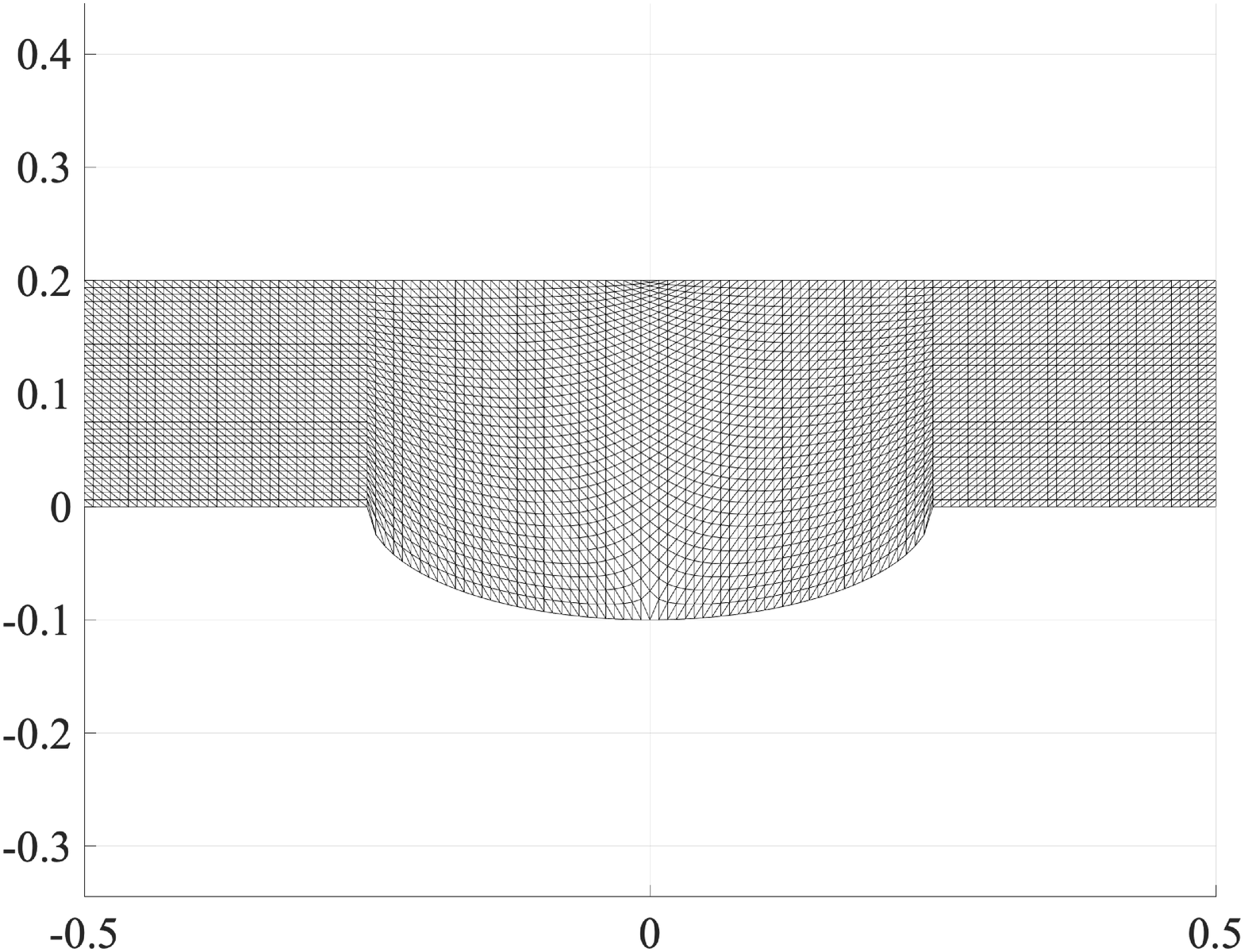}
\end{center}
\caption{Mesh for cavitation computations.\label{fig:meshcav}}
\end{figure}
\begin{figure}[hbt]
\begin{center}
\includegraphics[width=3in]{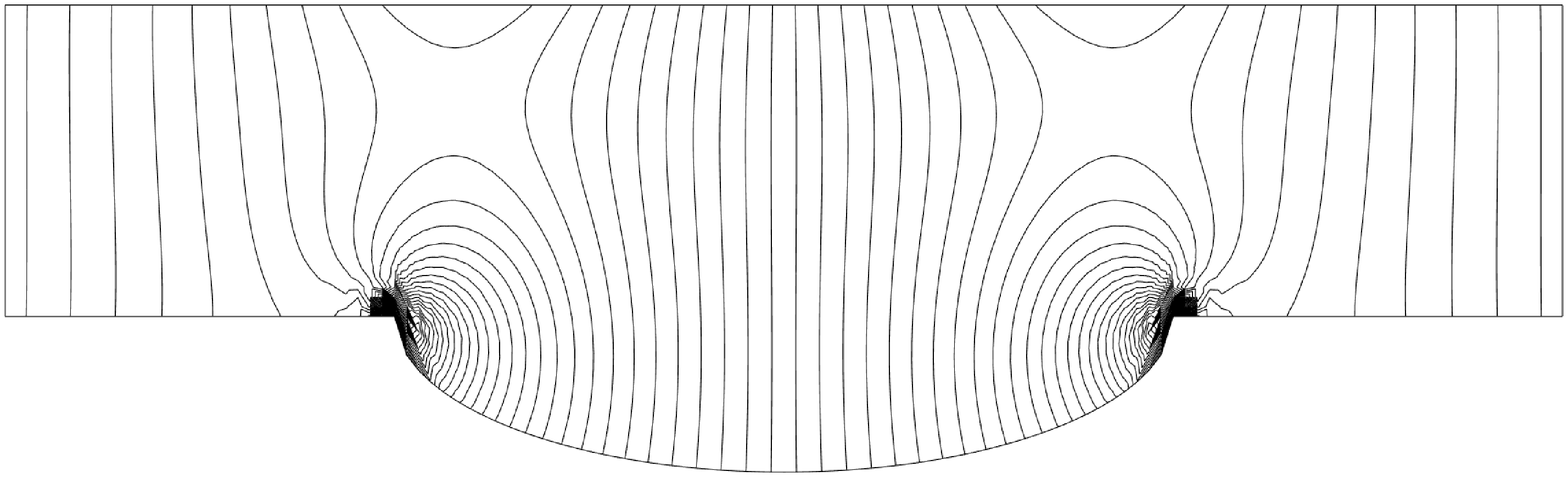}\includegraphics[width=3in]{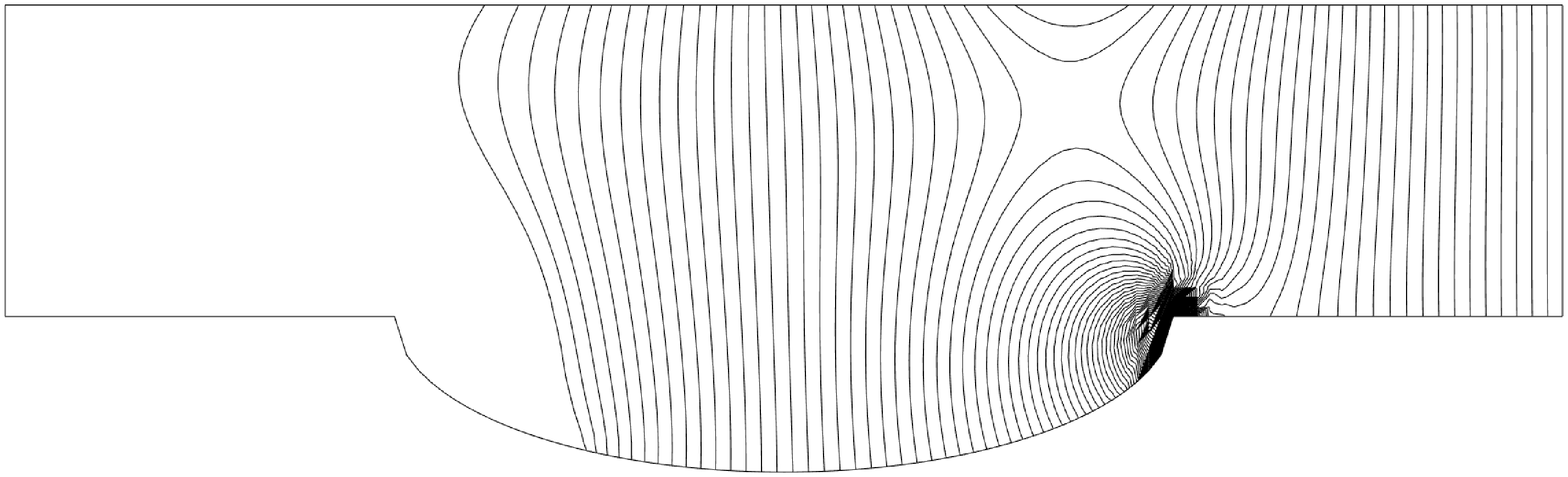}
\end{center}
\caption{Pressure isolines without (left) and with (right) cavitation.\label{fig:caviso}}
\end{figure}
\begin{figure}[hbt]
\begin{center}
\includegraphics[width=3in]{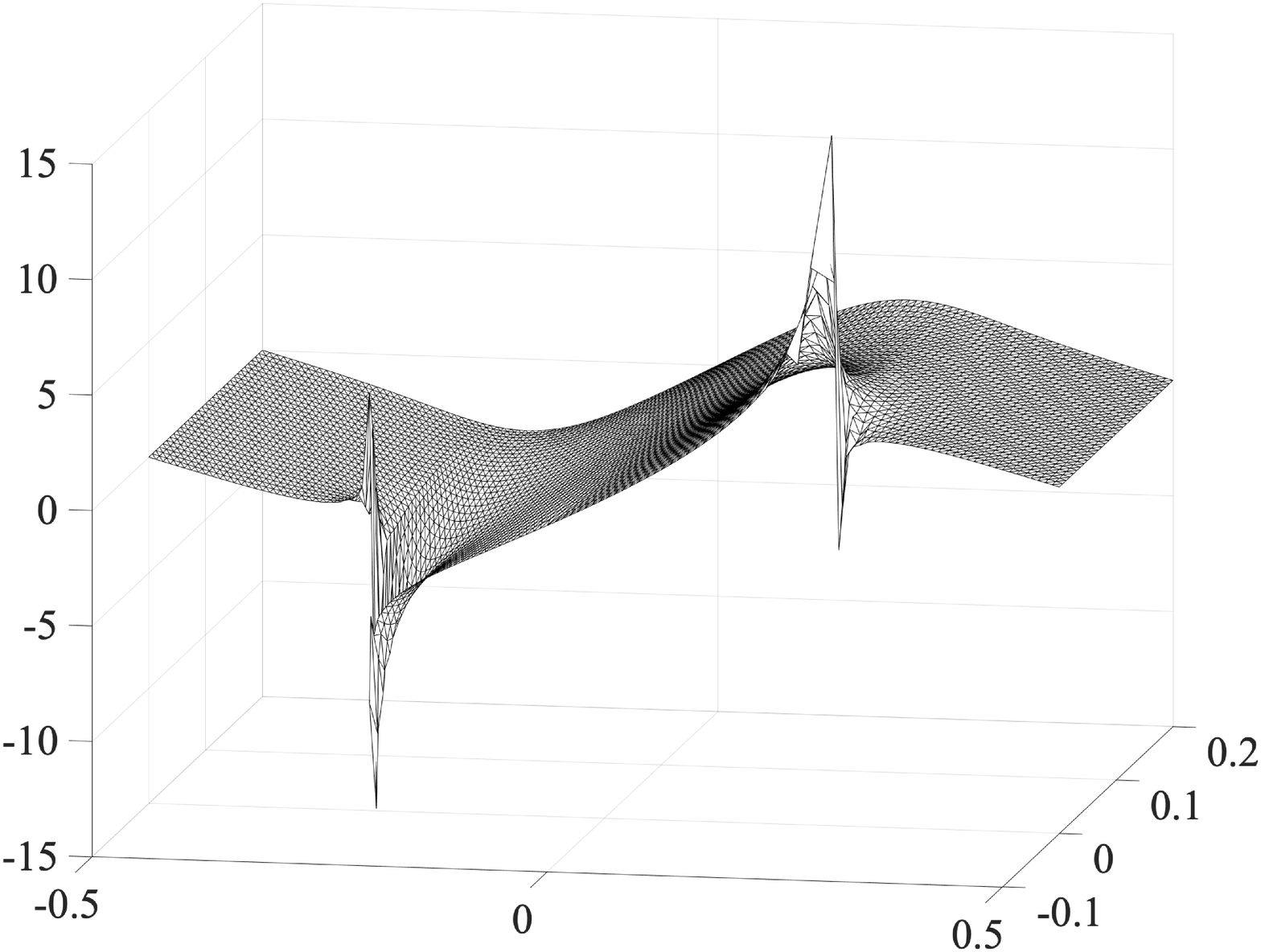}\includegraphics[width=3in]{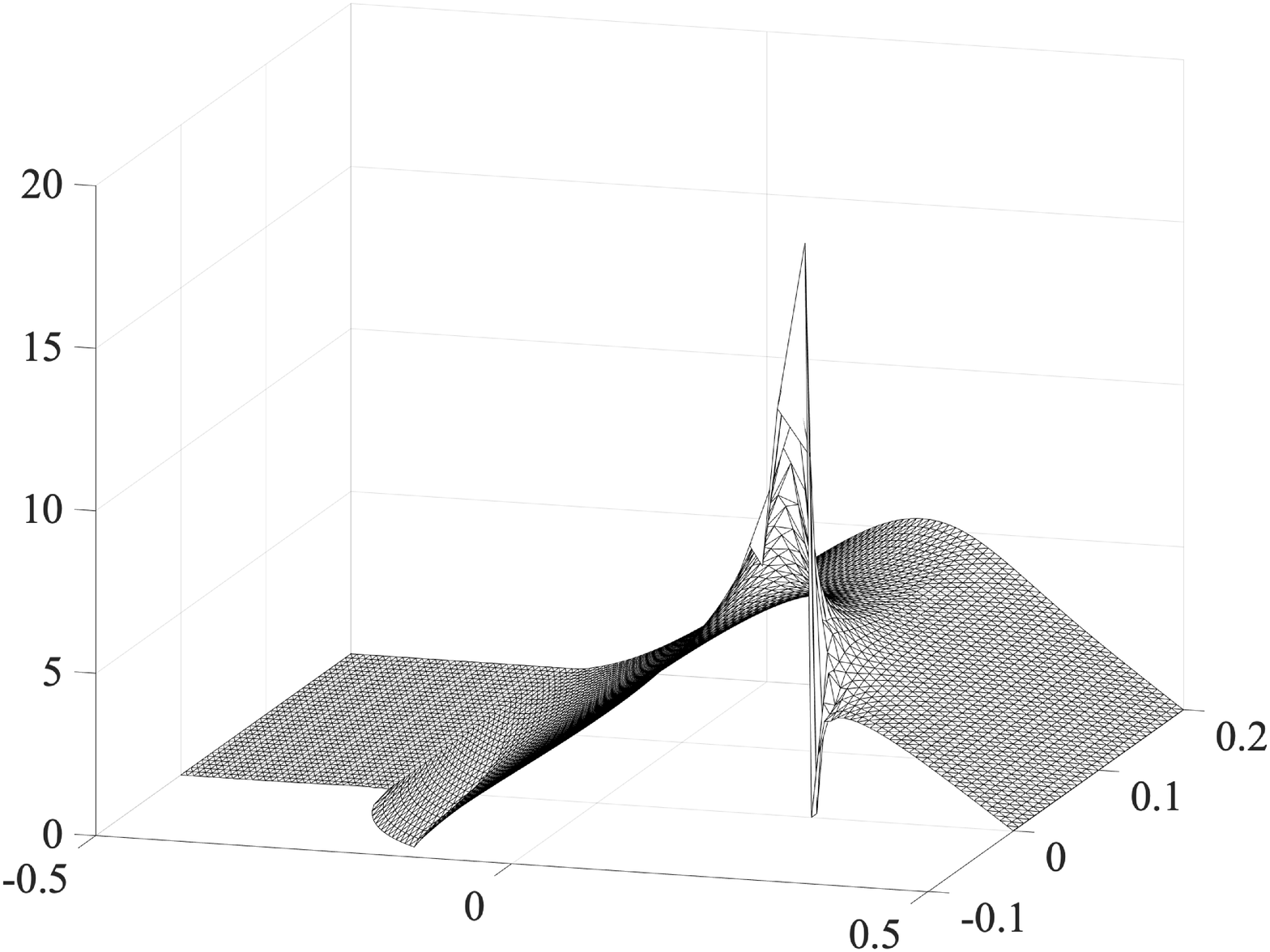}
\end{center}
\caption{Pressure elevation without (left) and with (right) cavitation.\label{fig:cavelev}}
\end{figure}
\begin{figure}[hbt]
\begin{center}
\includegraphics[width=3in]{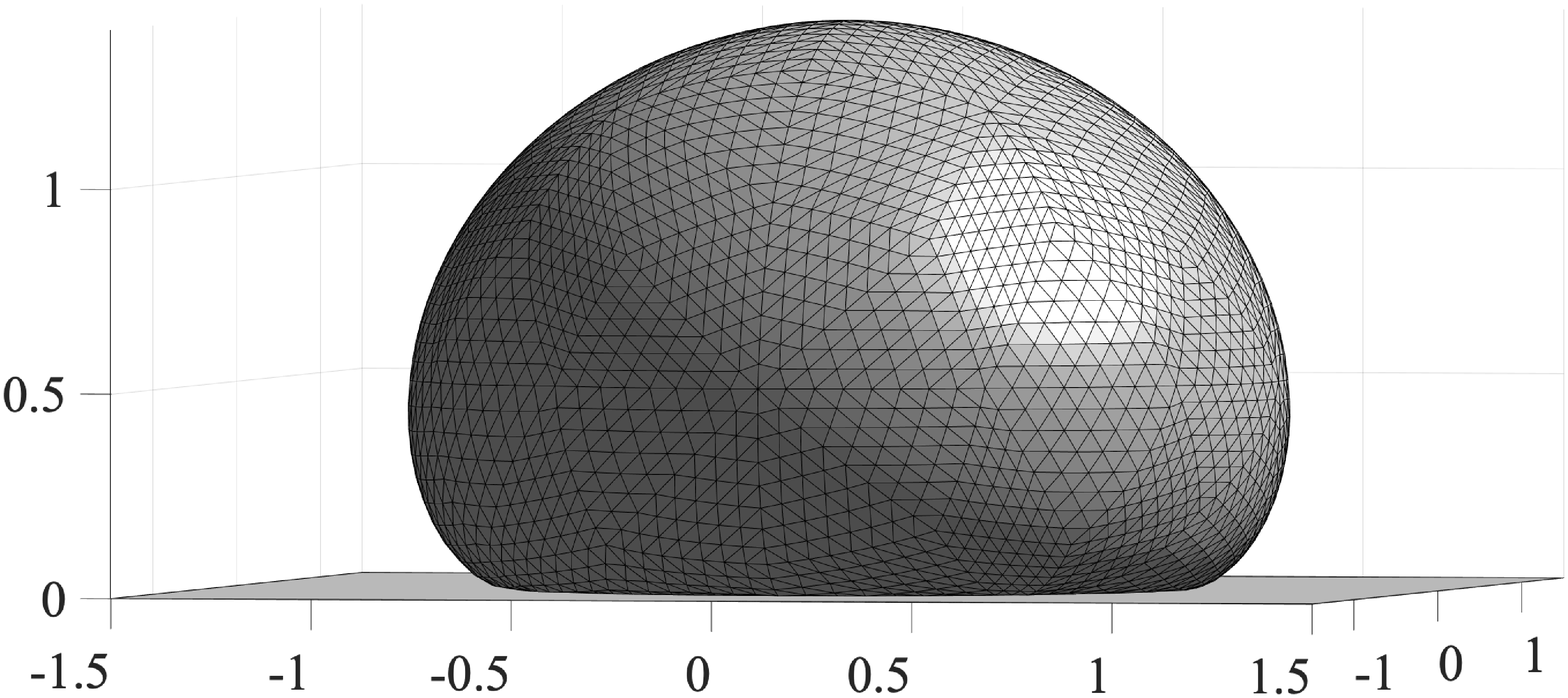}\includegraphics[width=3in]{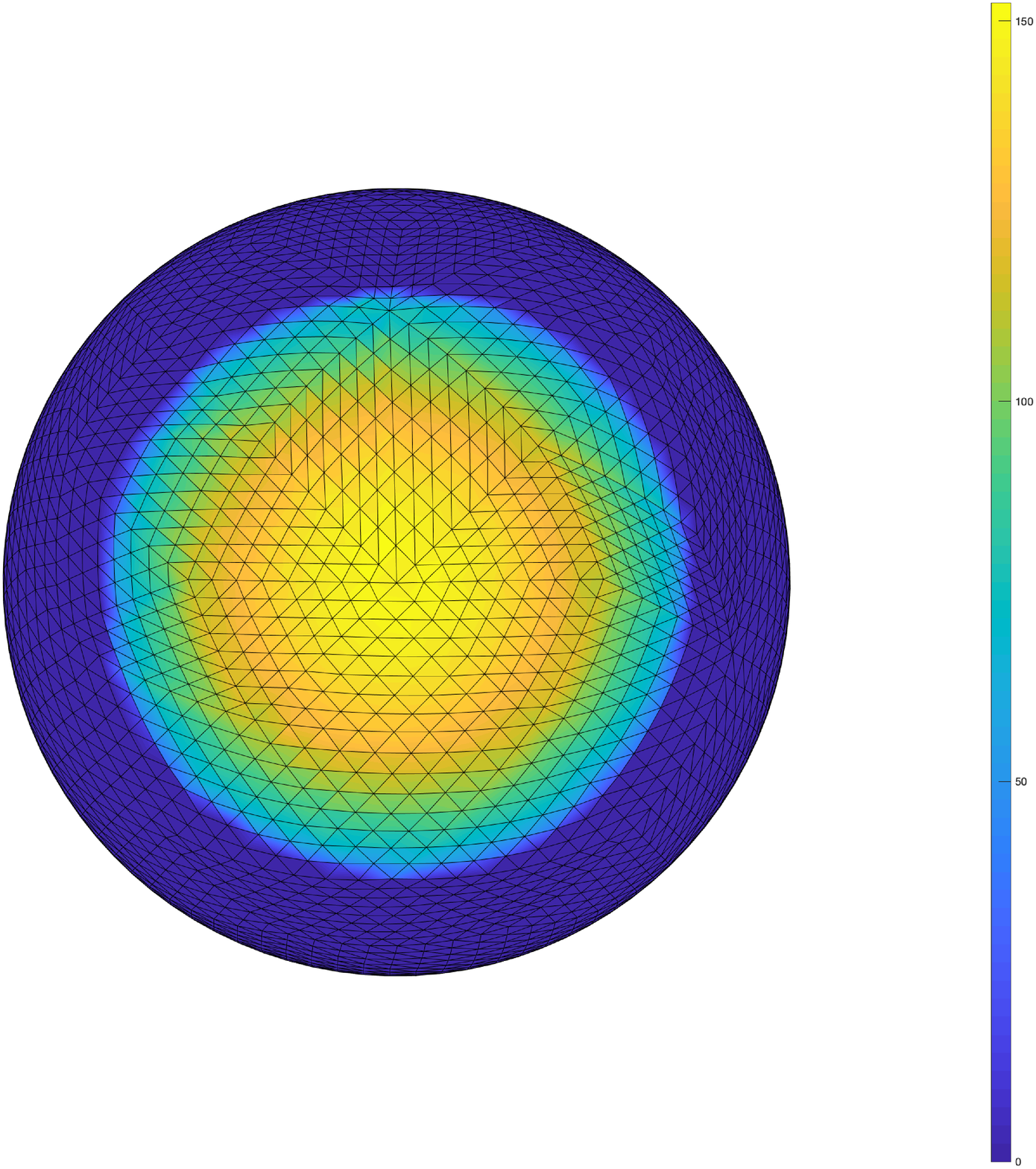}
\end{center}
\caption{Deformations for $\alpha=0$ and associated contact pressure.\label{fig:alpha0}}
\end{figure}
\begin{figure}[hbt]
\begin{center}
\includegraphics[width=3in]{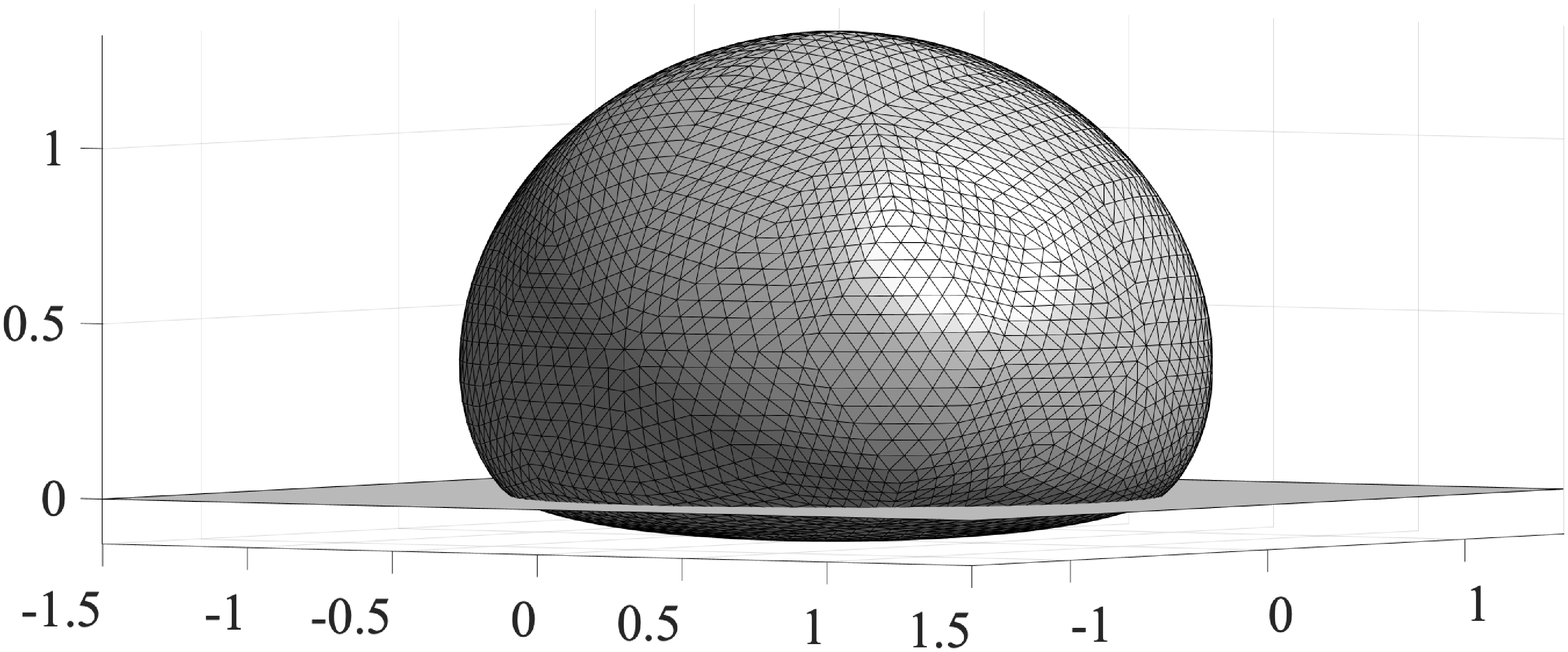}\includegraphics[width=3in]{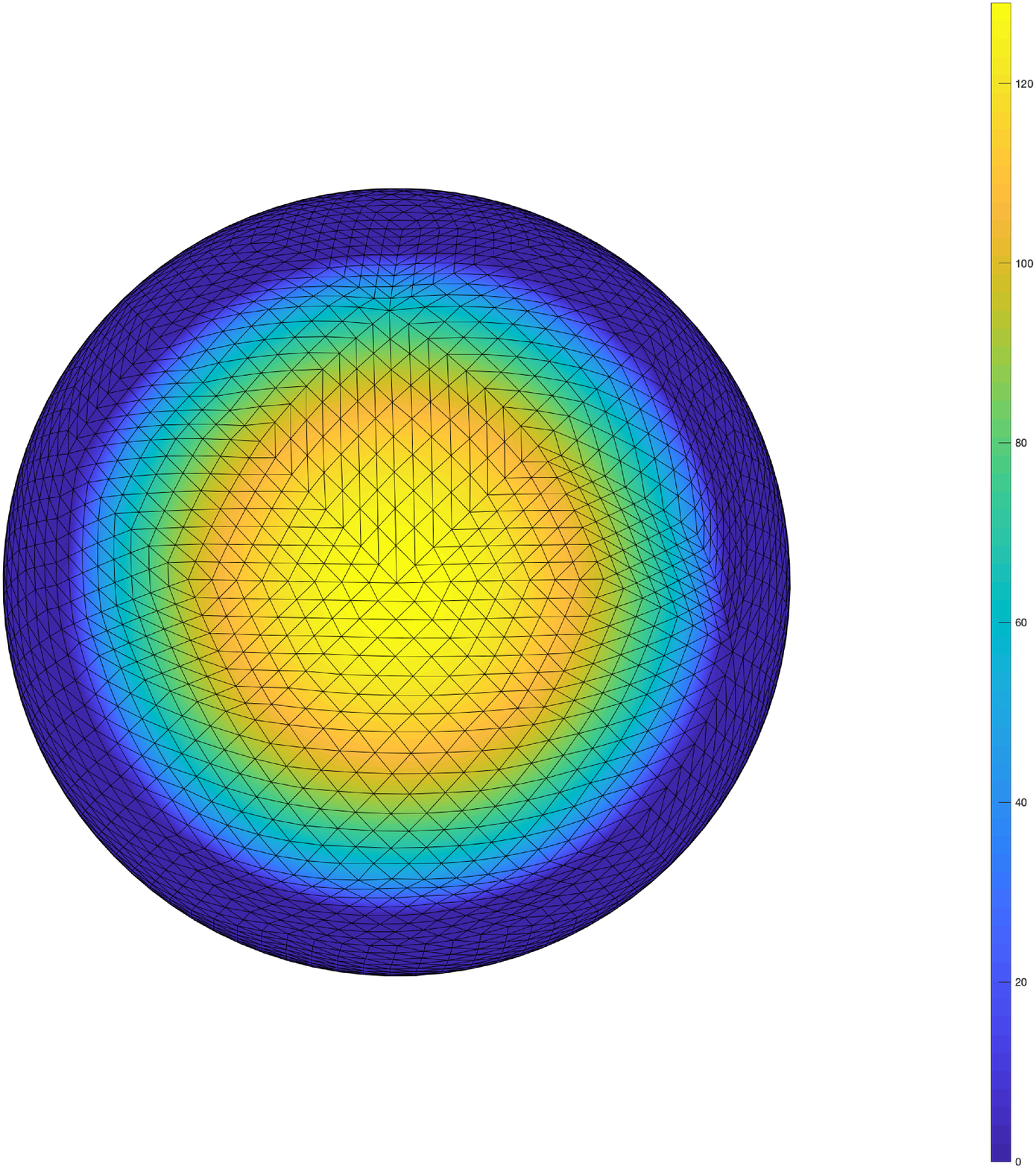}
\end{center}
\caption{Deformations for $\alpha=10^{-3}$ and associated contact pressure.\label{fig:alphaminus3}}
\end{figure}
\begin{figure}[hbt]
\begin{center}
\includegraphics[width=3in]{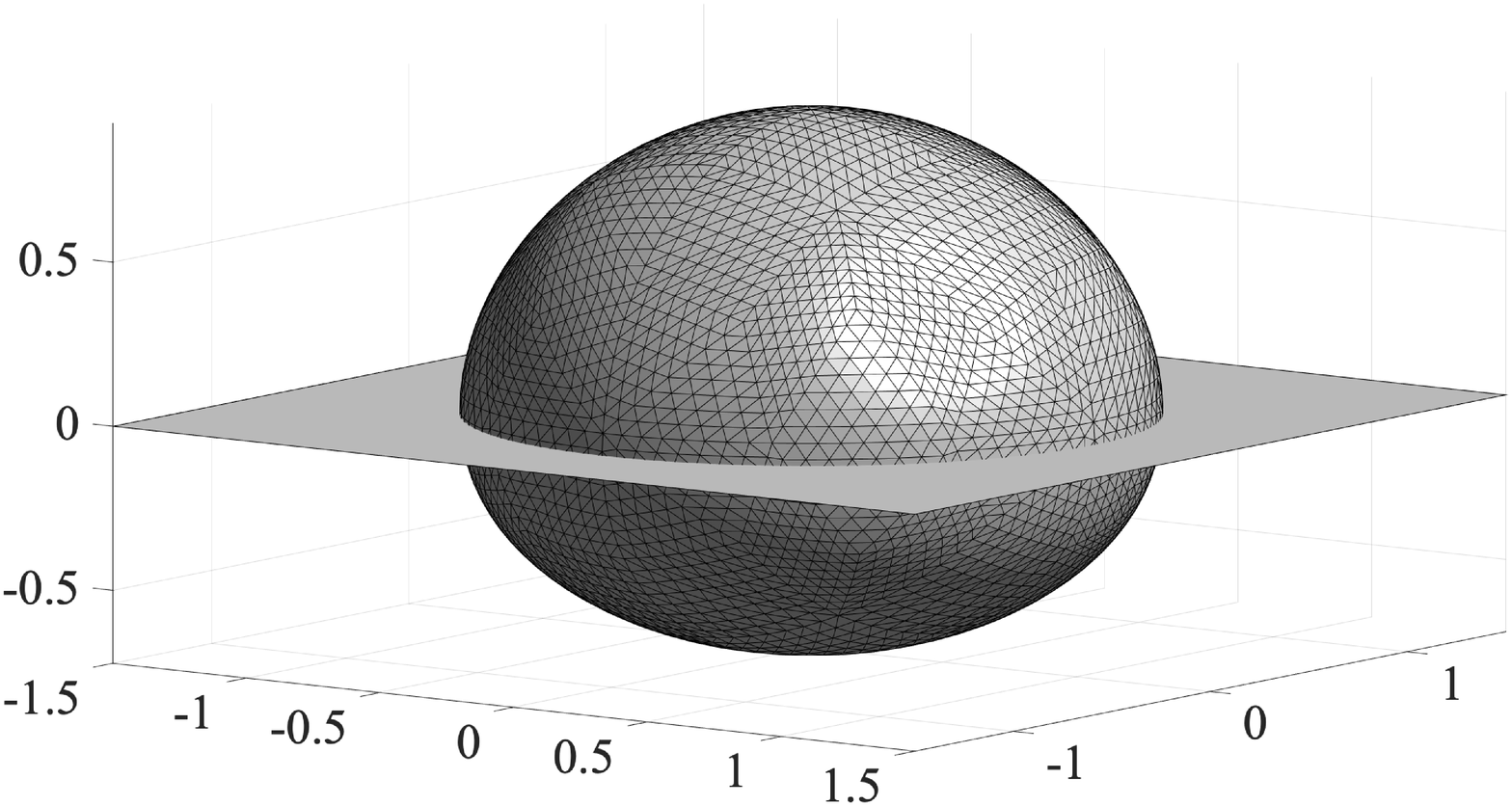}\includegraphics[width=3in]{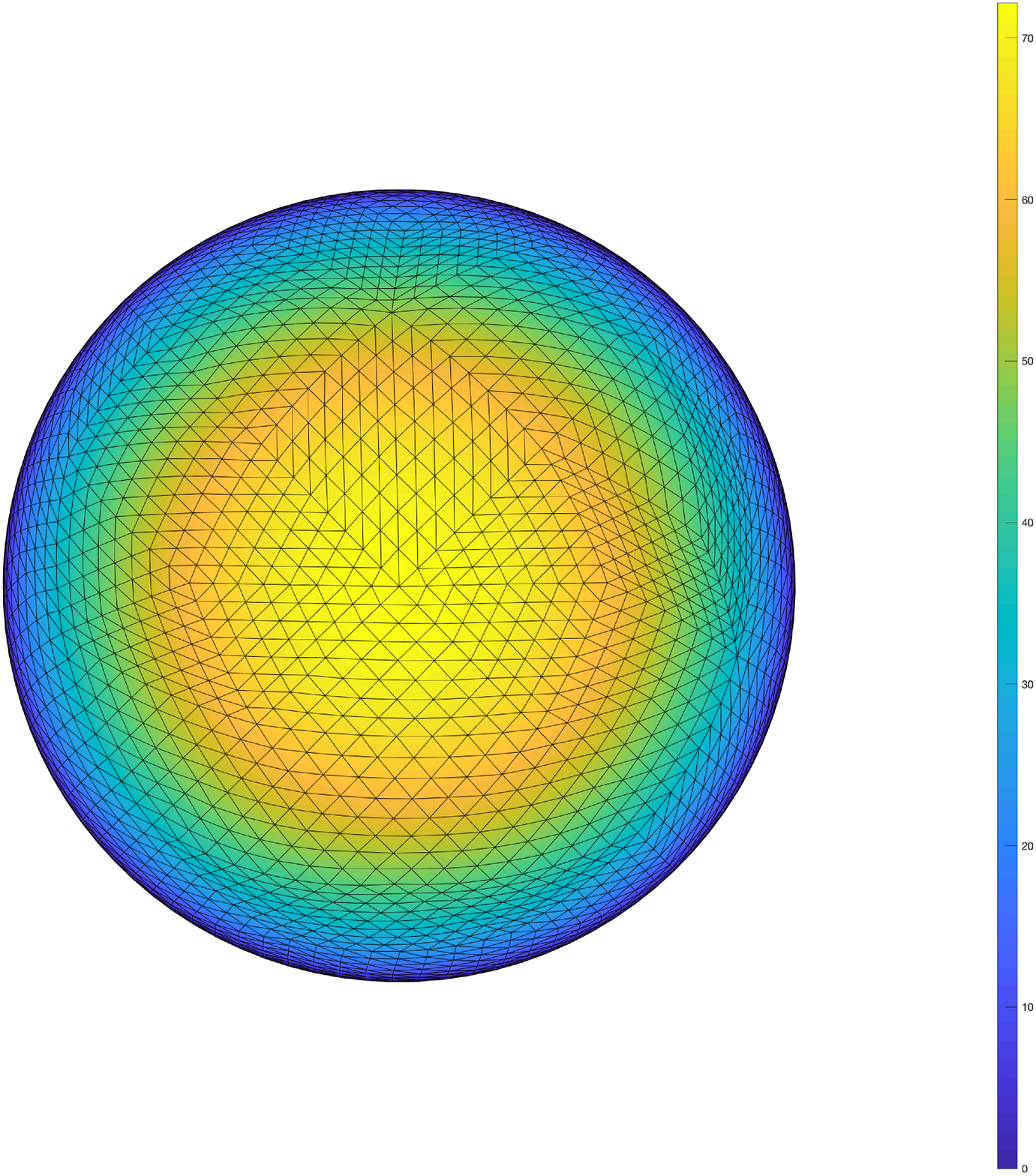}
\end{center}
\caption{Deformations for $\alpha=10^{-2}$ and associated contact pressure.\label{fig:alphaminus2}}
\end{figure}
\begin{figure}[hbt]
\begin{center}
\includegraphics[width=3in]{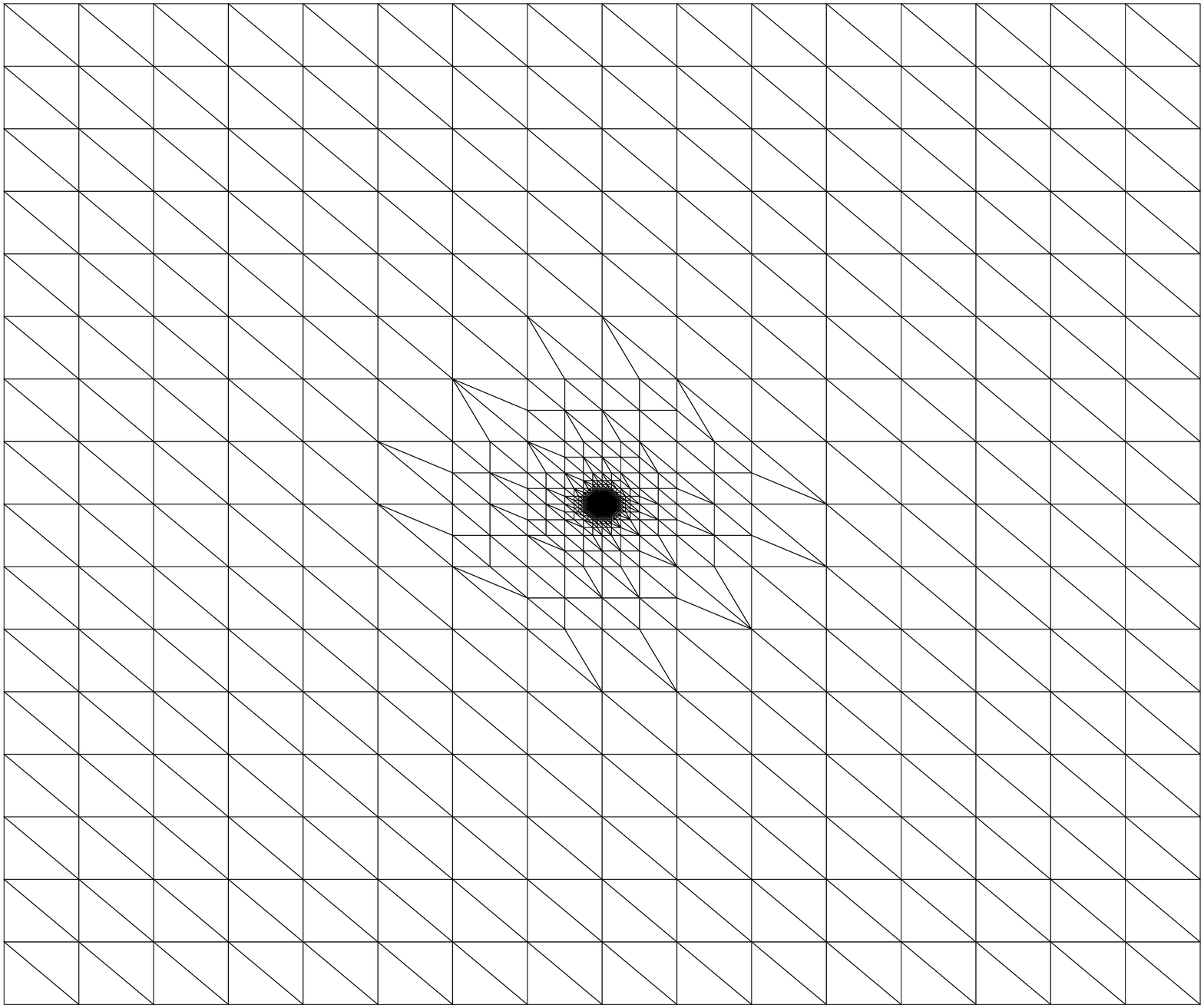}\includegraphics[width=3in]{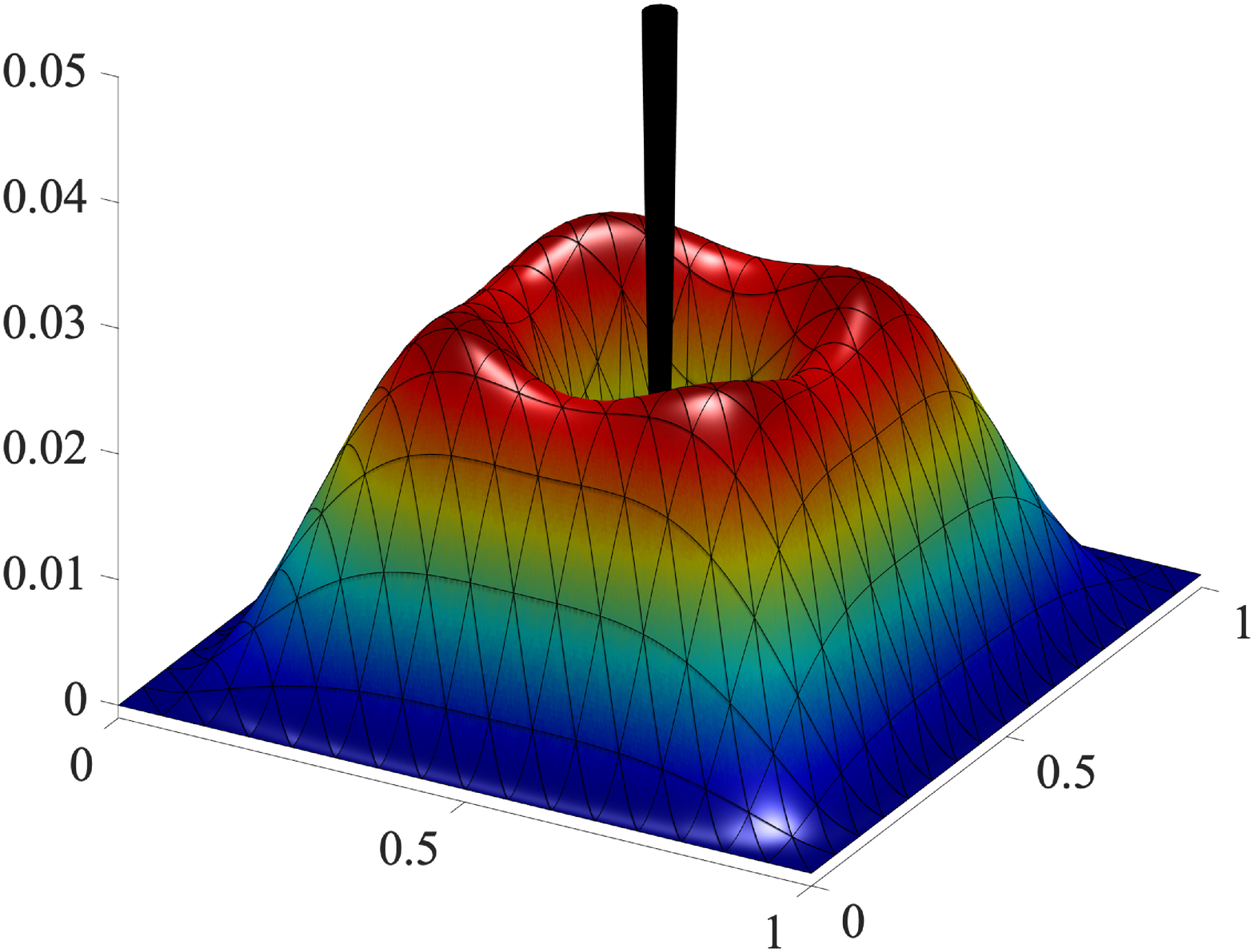}
\end{center}
\caption{Computational mesh and elevation of displacements with obstacle indicated.\label{fig:platemesh}}
\end{figure}
\begin{figure}[hbt]
\begin{center}
\includegraphics[width=4in]{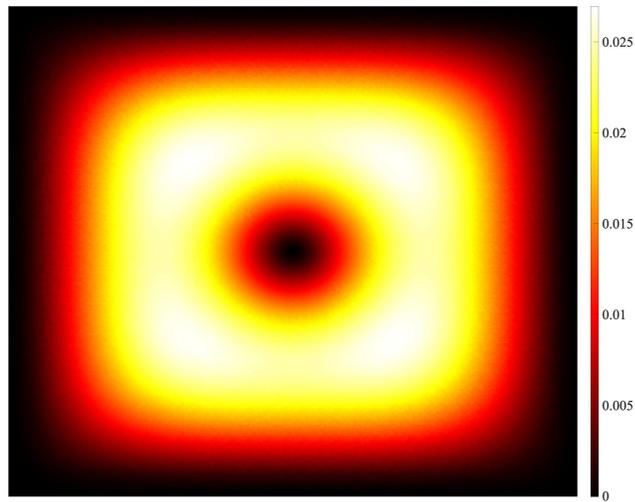}
\end{center}
\caption{Deformation isoplot.\label{fig:plateiso}}
\end{figure}
\end{document}